\newcommand{\be}{\begin{equation}}
\newcommand{\ee}{\end{equation}}
\newcommand{\dalign}[1]{\[\begin{aligned} #1 \end{aligned}\]}
\newcommand{\euD}{\EuScript{D}}
\newcommand{\euR}{\EuScript{R}}
\newcommand{\bfT}{{\bf T}}
\title[Shifting ordinates of zeros of the zeta function]
{Shifting the ordinates of\break zeros of the Riemann zeta function}
\author[W.~Banks]{William Banks}
\address{Department of Mathematics, 
         University of Missouri, 
         Columbia MO 65211, USA.}
\email{bankswd@missouri.edu}
\date{\today}
\begin{document}

\begin{abstract}
Let $y\ne 0$ and $C>0$. Under the Riemann Hypothesis,
there is a number $T_*>0$ $($depending on $y$ and $C)$
such that for every $T\ge T_*$, both
\[
\zeta(\tfrac12+i\gamma)=0
\mand\zeta(\tfrac12+i(\gamma+y))\ne 0
\]
hold for at least one $\gamma$ in the interval $[T,T(1+\eps)]$, where
$\eps\defeq T^{-C/\log\log T}$.
\end{abstract}

\thanks{MSC Primary: 11M26; Secondary 11M06}

\thanks{Keywords: Riemann zeta function, ordinates, zeros.}

\maketitle

\tableofcontents

%%%%%%%%%%%%%%%%%%%%%%%%
%%%%% PAPER BEGINS %%%%%
%%%%%%%%%%%%%%%%%%%%%%%%

\newpage

\vskip3cm

{\Large\section{Introduction}}

\subsection{Background and motivation}
The \emph{Riemann zeta function} is a central
object of study in number theory. The zeta function is defined in the
half-plane $\{s\in\C:\sigma\defeq\Re(s)>1\}$ by the equivalent formulas
$$
\zeta(s)\defeq\sum_{n\ge 1} n^{-s}
=\prod_{p\text{~prime}}(1-p^{-s})^{-1}.
$$
Riemann~\cite{Riemann} showed
that $\zeta(s)$ continues analytically to a meromorphic
function in the whole complex plane (the only singularity is a
simple pole at $s=1$), and it
satisfies a functional equation relating its values at
$s$ and $1-s$.\footnote{There are many excellent accounts of the
theory of the Riemann zeta function; we refer the reader to 
Titchmarsh~\cite{Titchmarsh} and to Borwein \emph{et al}~\cite{Borwein}
for essential background.}

Because the distribution of prime numbers is determined by the
location of the nontrivial zeros of $\zeta(s)$, the study of such zeros
is of paramount importance in prime number theory. The 
\emph{Riemann Hypothesis} (RH) asserts that
if $\rho=\beta+i\gamma$ is a zero of $\zeta(s)$ such that $\beta>0$,
then $\beta=\frac12$. If RH is true, then one obtains as a consequence
an optimal estimate for the counting function $\pi(x)$
of the primes $p\le x$. However, despite extraordinary advances towards RH by
Hardy, Selberg, Levinson, Conrey, and others, the Riemann Hypothesis
still remains far from settled.
In the words of Selberg, ``...there have been very few attempts at proving the Riemann hypothesis, because, simply, no one has ever had any really good idea for how to go about it.''

The Riemann Hypothesis provides a complete description of the horizontal 
distribution of the zeros of $\zeta(s)$. However, even assuming
that RH is true, very little is known unconditionally about the vertical distribution of the
zeros on the critical line $\{\sigma=\frac12\}$.
In a pioneering paper of great importance,
Montgomery~\cite{PairCorr1} proposed his celebrated
\emph{Pair Correlation Conjecture} (PCC), which (under RH) asserts
that the pair correlation between pairs of zeros of $\zeta(s)$
(normalized to have unit average spacing) is
$1-((\sin\pi u)/\pi u)^2$.
As Dyson had pointed out to Montgomery, this is the same as the pair
correlation function of random Hermitian matrices. More precisely, 
PCC asserts that for any $\alpha\le\beta$, one has
\[
\lim\limits_{T\to\infty}\frac{\big|\{(\gamma,\gamma'):0<\gamma,\gamma'\le T,
~\frac{2\pi\alpha}{\log T}\le\gamma'-\gamma
\le \frac{2\pi\beta}{\log T}\}\big|}
{\frac{1}{2\pi}T\log T}
=\int_\alpha^\beta\bigg\{1-\Big(\frac{\sin\pi u}{\pi u}\Big)^{\!2}\bigg\}\,du.
\]
Although PCC has significantly enhanced our conjectural understanding of
the gaps $\gamma'-\gamma$ between ordinates $\gamma,\gamma'$ 
of different zeros of $\zeta(s)$, and it has led to other conceptual
advances, it says nothing about how often gaps of a particular size $y\ne 0$
might occur. Indeed, this question appears to have been overlooked in the 
literature thus far. The aim of this paper, therefore, is to establish
some nontrivial results on this question. Our methods are flexible, and
the machinery we use can be adapted to treat other similar questions.

\subsection{Statement of results}
As the gaps between zeros of $\zeta(s)$
on the critical line form a \emph{countably infinite} set, for any specific
$y\ne 0$, it is natural to predict that the relation $\gamma'-\gamma=y$ holds for \emph{at most finitely many} pairs $(\gamma,\gamma')$; in other words,
one expects that $\zeta(\tfrac12+i(\gamma+y))\ne 0$ almost always
when $\zeta(\tfrac12+i\gamma)=0$.

\begin{theorem}\label{thm:main}
Assume RH. For any fixed $y\ne 0$ and $C>0$, there is a number
$T_*=T_*(y,C)>0$ such that for every $T\ge T_*$ we have
\[
\zeta(\tfrac12+i\gamma)=0
\mand\zeta(\tfrac12+i(\gamma+y))\ne 0
\]
for at least one $\gamma$ in the interval $[T,T(1+\eps)]$, where
$\eps\defeq T^{-C/\log\log T}$.
\end{theorem}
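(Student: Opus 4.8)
The plan is to argue by contradiction. Suppose that for some arbitrarily large $T$ the conclusion fails, so that for \emph{every} $\gamma\in I\defeq[T,T(1+\eps)]$ with $\zeta(\tfrac12+i\gamma)=0$ one also has $\zeta(\tfrac12+i(\gamma+y))=0$. Assume $y>0$; the case $y<0$ is entirely symmetric. Under RH every zero lies on the critical line, so, writing $Z$ for the set of ordinates of zeros of $\zeta$ in $I$, the failure of the conclusion says precisely that $\gamma\in Z$ and $\gamma+y\le T(1+\eps)$ together imply $\gamma+y\in Z$: the set $Z$ is closed under the partial shift $\gamma\mapsto\gamma+y$.

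The first step is to extract the combinatorial structure this forces. View $\gamma\mapsto\gamma+y$ as a partial injection of $Z$ into itself, defined exactly when $\gamma+y\le T(1+\eps)$; its functional graph has in- and out-degree at most $1$, and it has no cycle because the map is strictly increasing, so it is a disjoint union of finite arithmetic progressions of common difference $y$. By the closure property a progression can terminate only at an ordinate $\gamma$ with $\gamma+y>T(1+\eps)$, hence at an ordinate in the window $(T(1+\eps)-y,\,T(1+\eps)]$. The number of progressions is therefore at most the number of ordinates in that window, which by the Riemann--von Mangoldt formula is $\ll_y\log T$. Thus $Z$ is a union of $\ll_y\log T$ arithmetic progressions of common difference $y$.

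The second step contradicts this using the exponential sum $\sum_{\gamma\in Z}p^{i\gamma}$. Fix a prime $p$ with $p^{iy}\ne1$; such a prime exists, for otherwise $y\log p\in2\pi\mathbf Z$ for every prime $p$, forcing $\log p/\log q\in\mathbf Q$ for all primes $p,q$, which is false. Put $u\defeq p^{iy}$, so $|u-1|=2\,|\sin(\tfrac12 y\log p)|\gg_{p,y}1$. Summing the geometric series over each progression $P=\{a_P,a_P+y,\dots\}\subseteq Z$ gives
\[
\Bigl|\,\sum_{\gamma\in Z}p^{i\gamma}\,\Bigr|
=\Bigl|\,\sum_{P}p^{ia_P}\,\frac{u^{\,|P|}-1}{u-1}\,\Bigr|
\ \le\ \frac{2\,\#\{P\}}{|u-1|}\ \ll_{p,y}\ \log T .
\]
On the other hand, Landau's formula $\sum_{0<\gamma\le V}p^{\rho}=-\tfrac{V}{2\pi}\log p+O_p(\log V)$, which is unconditional for a fixed prime $p$, combined with RH (to write $p^{\rho}=\sqrt p\,p^{i\gamma}$ and to identify the zeros in $I$, counted with multiplicity, with the ordinates in $Z$ weighted by their multiplicities $m_\gamma$), gives
\[
\sum_{\gamma\in Z}m_\gamma\,p^{i\gamma}
=\frac1{\sqrt p}\Bigl(\sum_{0<\gamma\le T(1+\eps)}p^{\rho}-\sum_{0<\gamma\le T}p^{\rho}\Bigr)
=-\frac{\log p}{2\pi\sqrt p}\,T\eps+O_p(\log T),
\]
so that $\bigl|\sum_{\gamma\in Z}m_\gamma p^{i\gamma}\bigr|\gg_p T\eps=T^{\,1-C/\log\log T}$, which outgrows every power of $\log T$. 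Comparison with the previous display then yields the desired contradiction for $T\ge T_*(y,C)$.

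The heart of the argument is the arithmetic-progression structure extracted in the first step. What needs the most care is the possibility of \emph{multiple} zeros: the two displays above control $\sum_{\gamma\in Z}m_\gamma p^{i\gamma}$ and $\sum_{\gamma\in Z}p^{i\gamma}$ respectively, and these differ by the excess-multiplicity sum $\sum_{\gamma\in Z}(m_\gamma-1)p^{i\gamma}$; one must show this is $o(T\eps)$, which RH alone does not provide. I expect this is where a more robust, averaged form of the argument is needed, which would also account for the cautious choice of $\eps$, since the argument sketched here in fact tolerates any $\eps$ with $T\eps/\log T\to\infty$.
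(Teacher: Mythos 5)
Your reduction to the chain structure is correct as far as it goes (the negation of the theorem does make the set $Z$ of zero ordinates in $[T,T(1+\eps)]$ closed under the partial shift $\gamma\mapsto\gamma+y$, the number of maximal chains is $\ll_y\log T$, and a fixed prime $p$ with $p^{iy}\ne 1$ exists), but the multiplicity issue you flag at the end is not a technicality to be cleaned up later: it is a fatal gap, and it is exactly the point your route cannot cross with RH alone. Landau's formula is an explicit-formula identity and inherently produces the multiplicity-weighted sum $\sum_{\gamma\in Z}m_\gamma p^{i\gamma}\gg_p T\eps$, while your geometric-series argument over chains only controls the unweighted sum $\sum_{\gamma\in Z}p^{i\gamma}\ll_{p,y}\log T$; nothing in the contradiction hypothesis propagates multiplicities along a chain ($\gamma\in Z$ forces $\gamma+y$ to be a zero, but says nothing about $m_{\gamma+y}$). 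To bridge the two displays you would need $\sum_{\gamma\in Z}(m_\gamma-1)=o(T\eps)$, i.e.\ that all but a proportion $O(1/\log T)$ of the $\asymp T\eps\log T$ zeros (counted with multiplicity) in this very short interval are simple. No such statement is known, even under RH together with pair-correlation--type hypotheses; RH does not rule out, say, a positive proportion of double zeros in $[T,T(1+\eps)]$, in which case your two estimates are perfectly compatible and no contradiction arises. This also explains why your claim that the argument ``tolerates any $\eps$ with $T\eps/\log T\to\infty$'' should have been a warning sign: it would prove something far stronger than the theorem.

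The paper's proof is built precisely to be insensitive to multiplicities: instead of detecting zeros by an indicator, it weights each zero $\rho=\frac12+i\gamma$ by $\zeta(\rho+iy)$ and evaluates $\sum_{T_1<\gamma<T_2}x^\rho\zeta(\rho+iy)$ by contour integration (Theorem~\ref{thm:main2}). Under your contradiction hypothesis every term of that sum vanishes \emph{term by term}, whatever $m_\gamma$ may be, whereas the asymptotic evaluation gives a main term $\frac{1}{2\pi}(x^{-iy}-1)\Delta\log\bfT$ which is nonzero once one chooses a prime $x$ in the admissible range with $|x^{-iy}-1|>\frac{1}{\sqrt2}$ (Lemma~\ref{lem:prime find}); this is also why the prime must be taken of size about $\bfT\sL^{-\Theta}$ rather than fixed, and why $\eps=T^{-C/\log\log T}$ appears: the error terms $\bfT\sL^{-A}$ in Theorem~\ref{thm:main2} must be beaten by $\Delta\log\bfT$. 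If you want to salvage your approach, you would have to replace the zero-indicator by a weight that vanishes at shifted zeros independently of multiplicity --- and that is essentially the weight $\zeta(\rho+iy)$, i.e.\ the paper's argument.
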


To prove the theorem, we study sums of the form
\[
\ssum{\rho=\frac12+i\gamma\\T_1<\gamma<T_2}x^{\rho}\zeta(\rho+iy),
\]
where $T_1$ is large and $T_1\asymp T_2$.
To estimate such sums, we apply the residue theorem with
the meromorphic function
\[
\euD_y(s)\defeq\frac{\zeta'}{\zeta}(s)\zeta(s+iy).
\]
Note that $\euD_y(s)$ can be represented
in the half-plane $\{\sigma>1\}$ by
an absolutely convergent Dirichlet series $\sum_n D_y(n)n^{-s}$ 
whose coefficients are given by
\[
D_y(n)\defeq -\sum_{ab=n}\Lambda(a)b^{-iy}\qquad(n\in\N),
\]
where $\Lambda$ is the von Mangoldt function.
For later reference, we note that
\be\label{eq:Dyn lower bound}
D_y(p)=-\log p\qquad(p\text{~prime}).
\ee

\begin{theorem}\label{thm:main2}
Assume RH. For any $y\ne 0$ and $A>0$, there are numbers
$T_0=T_0(y,A)>0$ and $\Theta=\Theta(A)>0$ for which the following holds.
For any $T_1,T_2$ satisfying
$2T_1>T_2>T_1>T_0$, let
\[
\Delta\defeq T_2-T_1,\qquad
\bfT\defeq\tfrac12(T_1+T_2),\qquad
\sL\defeq\exp\bigg(\frac{\log\bfT}{\log\log\bfT}\bigg).
\]
Then, for every prime number $x$ in the range
\be\label{eq:prime range}
\bfT\sL^{-\Theta}<x<\er^{\pi/|y|}\cdot\bfT\sL^{-\Theta},
\ee
we have
\be\label{eq:super}
\ssum{\rho=\frac12+i\gamma\\T_1<\gamma<T_2}x^\rho\zeta(\rho+iy)
=\tfrac{1}{2\pi}(x^{-iy}-1)\Delta\log\bfT
+O\bigg(\frac{\Delta\log\bfT}{(\log\log\bfT)^{1/2}}+\bfT\sL^{-A}\bigg),
\ee
where the sum in \eqref{eq:super} runs over zeros of $\zeta(s)$
$($each zero is summed according to its multiplicity$\,)$,
and the implied constant in \eqref{eq:super} depends only on $y$ and $A$.
\end{theorem}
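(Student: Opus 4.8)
The plan is to run the Landau--Gonek machinery: realise the sum in \eqref{eq:super} as a contour integral of $x^s\euD_y(s)$ around a long thin rectangle and extract the main term from an arithmetic ``diagonal''. Fix $b\defeq 1+1/\log\bfT$. After replacing $T_1,T_2$ by nearby values (moved by $O(1/\log\bfT)$) so that neither horizontal line $\Im s=T_j$ comes within $\gg 1/\log\bfT$ of an ordinate of a zero of $\zeta$, apply the residue theorem to $x^s\euD_y(s)$ on the rectangle $\euR$ with vertices $(1-b)+iT_1$, $b+iT_1$, $b+iT_2$, $(1-b)+iT_2$. Under RH the only poles inside $\euR$ (for $T_0$ large, so that the poles of $\euD_y$ at $s=1$, at $s=1-iy$, and at the trivial zeros lie outside) are the zeros $\rho=\tfrac12+i\gamma$ of $\zeta$ with $T_1<\gamma<T_2$; since $\frac{\zeta'}{\zeta}$ has a simple pole of residue $m_\rho$ there, the residue of $x^s\euD_y(s)$ at such a $\rho$ is $m_\rho x^\rho\zeta(\rho+iy)$ (in particular $0$ when $\rho+iy$ is also a zero), so the left side of \eqref{eq:super} equals $\frac1{2\pi i}\oint_{\euR}x^s\euD_y(s)\,ds$. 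It remains to estimate the four edges.

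\emph{The right edge and the horizontal edges.} On $\Re s=b$ expand $\euD_y(s)=-\sum_{k,l}\Lambda(k)l^{-iy}(kl)^{-s}$, truncated at length $\er^{O((\log\bfT)^2)}$ at negligible cost, and integrate termwise. Because $x$ is prime, the only surviving diagonal term $kl=x$ is $(k,l)=(x,1)$, contributing $-\frac{\Lambda(x)}{2\pi i}\int_{b+iT_1}^{b+iT_2}ds=-\frac{\Delta\log x}{2\pi}=-\frac{\Delta\log\bfT}{2\pi}+O\!\big(\tfrac{\Delta\log\bfT}{\log\log\bfT}\big)$ by \eqref{eq:prime range}; the off-diagonal terms, bounded by $\ll\sum_{v\ne x}(\log v)(x/v)^b\min\!\big(\Delta,|\log(x/v)|^{-1}\big)$ and split according to whether $v$ is near $x$, total $\ll x(\log\bfT)^2\ll\bfT\sL^{-\Theta}(\log\bfT)^2$. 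On the horizontal edges $\Im s=T_j$ one uses $|x^{\sigma+iT_j}|=x^\sigma\le x^b\asymp\bfT\sL^{-\Theta}$ together with conditional pointwise bounds for $\frac{\zeta'}{\zeta}$ and $\zeta$ near the critical line (using the functional equation for $\sigma<\tfrac12$ and the choice of $T_j$ away from zeros), giving $\ll\bfT\sL^{-\Theta}\sL^{O(1)}$. Both are $\ll\bfT\sL^{-A}$ once $\Theta=\Theta(A)$ is taken large enough; this is the sole purpose of the factor $\sL^{-\Theta}$ in \eqref{eq:prime range}.

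\emph{The left edge.} On $\Re s=1-b$ apply $\zeta(s+iy)=\chi(s+iy)\zeta(1-s-iy)$ and $\frac{\zeta'}{\zeta}(s)=\frac{\chi'}{\chi}(s)-\frac{\zeta'}{\zeta}(1-s)$, then expand $\zeta(1-s-iy)=\sum_n n^{s-1+iy}$ and $-\frac{\zeta'}{\zeta}(1-s)=\sum_m\Lambda(m)m^{s-1}$ (absolutely convergent; truncate at $\er^{O((\log\bfT)^2)}$). The integrand becomes a sum of oscillatory integrals $\int_{T_1}^{T_2}\big|\chi((1-b)+i(t+y))\big|\,\er^{i\Psi(t)}\,dt$ with $\Psi(t)=\arg\chi((1-b)+i(t+y))+t\log(xr)$, where $r$ runs over positive integers ($r=n$ for the $\frac{\chi'}{\chi}$-piece, $r=mn$ for the $\frac{\zeta'}{\zeta}(1-s)$-piece). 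Since $\arg\chi((1-b)+iu)=-u\log\tfrac u{2\pi}+u+\tfrac\pi4+O(1/u)$, the saddle is $t_0=2\pi xr-y$, which lies in $[T_1,T_2]$ exactly for the $\tfrac\Delta{2\pi x}+O(1)$ integers $r$ near $\sL^{\Theta}/2\pi$ (using $x\asymp\bfT\sL^{-\Theta}$), and there $\er^{i\Psi(t_0)}=\er^{i\pi/4}\er^{2\pi i xr}(xr)^{-iy}=\er^{i\pi/4}(xr)^{-iy}$ because $xr$ is an integer. Stationary phase applied to the $\frac{\chi'}{\chi}(s)$-piece (slowly-varying amplitude $\asymp\log\bfT$) gives $\asymp x^{1-iy}\log\bfT$ per admissible $r$; summing and using $xr\asymp\bfT/2\pi$ (so $\log(xr)=\log\bfT+O(1)$) yields $\frac{x^{-iy}\Delta\log\bfT}{2\pi}+O(\Delta)+O(\bfT\sL^{-\Theta}\log\bfT)$, the ``$+x^{-iy}$'' term of \eqref{eq:super}. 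The $\frac{\zeta'}{\zeta}(1-s)$-piece is bounded trivially, via $\sum_{m\mid r}\Lambda(m)=\log r$, by $\ll x\sum_{r\text{ admissible}}\log r\ll\tfrac{\Delta\log\bfT}{\log\log\bfT}+\bfT\sL^{-\Theta}\log\bfT$, and so is absorbed; the non-stationary integers (saddle outside $[T_1,T_2]$) are handled by the first-derivative test and, since their saddles are far from $[T_1,T_2]$ while $r$ ranges only up to $\bfT^{o(1)}$, contribute $O(\bfT^{1/2}(\log\bfT)^{O(1)})$; the secondary terms in stationary phase contribute $O(\Delta)$. Adding the four edges gives \eqref{eq:super}.

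The main obstacle is the left edge: in contrast to the other three, a trivial bound there overshoots the target by a factor $\asymp\bfT^{1/2}$, so the oscillatory integrals must genuinely be evaluated. The delicate points are (i) making the stationary-phase expansion uniform over the truncation ranges while keeping every secondary and edge-effect error below $\Delta\log\bfT/(\log\log\bfT)^{1/2}$ --- this is where the $\sqrt{\log\log\bfT}$-saving has to be tracked; and (ii) the arithmetic simplification $\er^{2\pi i xr}=1$, which is exactly where the hypothesis that $x$ is an \emph{integer} (not merely $\asymp\bfT\sL^{-\Theta}$) is essential and which produces the clean shape $\tfrac1{2\pi}(x^{-iy}-1)\Delta\log\bfT$ of the main term.
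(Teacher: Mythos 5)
Your outline is sound and reaches the theorem by the same overall mechanism as the paper — realize the sum as $\frac{1}{2\pi i}\oint x^s\euD_y(s)\,ds$ over a tall thin rectangle, get $-\frac{\Delta}{2\pi}\log x$ from the prime diagonal on the right edge, and get $+\frac{x^{-iy}}{2\pi}\Delta\log\bfT$ from the reflected left edge by a Gonek-type stationary-phase evaluation in which $\e(rx)=1$ because $x$ is an integer — but the execution differs from the paper in two genuine ways. First, you place the left edge at $\sigma\approx 0$ (the classical Landau--Gonek--CGG contour), so that after applying the functional equation the Dirichlet series for $\zeta'/\zeta(1-s)$ and $\zeta(1-s-iy)$ already converge; the paper instead takes the left edge at $\sigma=\frac12-\frac{1}{\log\log\bfT}$ and, after reflection, shifts the contour to $\sigma=c>1$, using RH to guarantee no poles are crossed and RH-Lindel\"of bounds along the shift. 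Your route avoids that shift (RH still enters for you: to know all residues sit at $\frac12+i\gamma$, and for the $\sL^{O(1)}$-type bounds on the horizontal edges, where an unconditional $\bfT^{\eps}$ loss would already swamp $\sL^{-\Theta}$). Second, you bound the $\frac{\zeta'}{\zeta}(1-s)$-piece (the paper's $J_1$) trivially via $|\sum_{mn=r}\Lambda(m)n^{iy}|\le\log r$ and $r\asymp\sL^{\Theta}$, giving $\ll\Delta\log\bfT/\log\log\bfT+\bfT\sL^{-\Theta}\log\bfT$; the paper instead invokes its Lemma~\ref{lem:summation bound} (von Koch plus the Banks--Sinha uniform estimate). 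At the error level $\Delta\log\bfT/(\log\log\bfT)^{1/2}$ claimed in \eqref{eq:super} your trivial bound does suffice, so this is a real simplification — the heavier input would only matter if one wanted to beat the $\log\log$ loss. Three small points need repair but do not affect the conclusion: (i) you cannot in general move $T_1,T_2$ by only $O(1/\log\bfT)$ and guarantee distance $\gg 1/\log\bfT$ from all ordinates (zeros may cluster); move them by $O(1)$ instead and charge the $O(\log\bfT)$ crossed zeros at $\ll x^{1/2}\sL^{O(1)}$ each (RH via Lemma~\ref{lem:MV2}), which is $\ll\bfT\sL^{-A}$; (ii) on the horizontal edges you must balance $x^{\sigma}$ against the size $\asymp\bfT^{1/2-\sigma}\sL^{O(1)}$ of $\zeta(s+iy)$ pointwise rather than multiplying the separate maxima (the product peaks at $\ll(x+\bfT^{1/2})\sL^{O(1)}$, which is fine); (iii) the non-stationary terms whose saddle $2\pi rx$ falls within $O(\bfT^{1/2})$ of $T_1'$ or $T_2'$ contribute $\ll\bfT\sL^{-\Theta}(\log\bfT)^{O(1)}$, not $\bfT^{1/2+o(1)}$ — this is exactly what the paper's Lemma~\ref{lem:technical bound} accounts for — but that is still absorbed once $\Theta=\Theta(A)$ is large.
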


{\Large\section{Auxiliary results}}

Throughout this section, any implied constants in the symbols
$O$, $\ll$, and $\gg$ are \emph{absolute} unless specified otherwise.
We denote $\e(u)\defeq\er^{2\pi iu}$ for all $u\in\R$.

\bigskip\subsection{Proof of Theorem~\ref{thm:main}}

In this subsection, we show that Theorem~\ref{thm:main} is a
 consequence of Theorem~\ref{thm:main2} coupled
with the following observation.

\begin{lemma}\label{lem:prime find}
Let $y\ne 0$. For every sufficiently large $t$, depending only on $y$,
there is a prime $p$ in the range $t<p<\er^{\pi/|y|}t$
such that $|p^{-iy}-1|>\frac{1}{\sqrt{2}}$.
\end{lemma}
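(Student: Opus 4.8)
The plan is to think about the problem multiplicatively. Writing $p^{-iy}=\e(-\tfrac{y}{2\pi}\log p)$, the condition $|p^{-iy}-1|>\tfrac{1}{\sqrt2}$ says exactly that the fractional part $\{\tfrac{y}{2\pi}\log p\}$ stays away from $0$ and $1$: a short computation shows $|\e(\theta)-1|=2|\sin\pi\theta|$, so $|p^{-iy}-1|>\tfrac{1}{\sqrt2}$ holds whenever $\|\tfrac{y}{2\pi}\log p\|$ (distance to the nearest integer) exceeds some absolute constant $\delta_0$ with $2\sin(\pi\delta_0)=\tfrac{1}{\sqrt2}$, i.e.\ $\delta_0=\tfrac1\pi\arcsin\tfrac{1}{2\sqrt2}$. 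So the target is: in any interval $(t,\er^{\pi/|y|}t)$ there is a prime $p$ with $\|\tfrac{y}{2\pi}\log p\|>\delta_0$. First I would note the length of this interval on the logarithmic scale: as $p$ ranges over $(t,\er^{\pi/|y|}t)$, the quantity $\tfrac{y}{2\pi}\log p$ ranges over an interval of length exactly $\tfrac{|y|}{2\pi}\cdot\tfrac{\pi}{|y|}=\tfrac12$. An interval of length $\tfrac12$ in $\R$ always contains a full subinterval of length $\tfrac12-2\delta_0>0$ on which $\|u\|>\delta_0$ — concretely, the set $\{u:\|u\|\le\delta_0\}$ consists of intervals of length $2\delta_0$ spaced one apart, so any interval of length $\tfrac12$ must meet the complementary ``good'' region in a subinterval of length at least $\tfrac12-2\delta_0$ (choosing $\delta_0$ small, say $\delta_0<\tfrac18$, makes this comfortably positive). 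Pulling back, there is a subinterval $(t_1,t_2)\subseteq(t,\er^{\pi/|y|}t)$ with $\log t_2-\log t_1\ge c_0/|y|$ for an absolute constant $c_0>0$, on which \emph{every} integer $n$ satisfies $|n^{-iy}-1|>\tfrac{1}{\sqrt2}$.

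It then remains to find a prime in $(t_1,t_2)$. Since $t_2/t_1\ge\er^{c_0/|y|}$ is bounded below by a constant depending only on $y$, this is a ``prime in a short interval'' statement of multiplicative type $x<p<(1+\eta)x$ with $\eta=\eta(y)>0$ fixed; by the prime number theorem (which is all we need, since $\eta$ is a constant, not shrinking), $\pi(t_2)-\pi(t_1)\sim\tfrac{t_2-t_1}{\log t}\to\infty$ as $t\to\infty$, so for $t$ large enough in terms of $y$ there is indeed such a prime. That prime $p$ lies in $(t,\er^{\pi/|y|}t)$ and satisfies $|p^{-iy}-1|>\tfrac{1}{\sqrt2}$, which is the claim.

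I do not expect any genuine obstacle here — this lemma is elementary equidistribution plus PNT. The only mildly delicate point is bookkeeping the constants: one must check that $\delta_0$ (determined by $2\sin\pi\delta_0=\tfrac1{\sqrt2}$) really is small enough that $\tfrac12-2\delta_0>0$, so that the good subinterval is nonempty; since $\arcsin\tfrac1{2\sqrt2}\approx 0.3614$ gives $\delta_0\approx 0.115<\tfrac14$, we get $\tfrac12-2\delta_0\approx 0.27>0$, with room to spare. (If one wanted a strict inequality $|p^{-iy}-1|>\tfrac1{\sqrt2}$ rather than $\ge$, simply shrink $\delta_0$ by an arbitrarily small amount; the subinterval length only shrinks slightly and stays positive.) Everything else is routine: the passage from $|\e(\theta)-1|$ to $\|\theta\|$, the pullback of the logarithmic subinterval, and the application of PNT to produce the prime.
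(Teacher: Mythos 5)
Your proposal is correct and reaches the lemma by a genuinely different route from the paper. You linearize on the logarithmic scale: the map $p\mapsto \tfrac{y}{2\pi}\log p$ sends $(t,e^{\pi/|y|}t)$ onto an interval of length exactly $\tfrac12$, the ``bad'' set $\{u:\|u\|\le\delta_0\}$ (with $2\sin\pi\delta_0=\tfrac1{\sqrt2}$, so $\delta_0\approx0.115$) consists of components of length $2\delta_0<\tfrac12$ spaced one apart, hence the image contains a good subinterval of positive length bounded below by an absolute constant, and you pull back and apply the prime number theorem for intervals $(x,\lambda x)$ with $\lambda>1$ fixed. The paper instead runs a two-prime pigeonhole: with $\alpha=e^{\pi/(4|y|)}$ it produces primes $p_1\in(t,\alpha t)$ and $p_2\in(\alpha^3 t,\alpha^4 t)$, notes that $\beta=p_2/p_1\in(\alpha^2,\alpha^4)$ forces $|\beta^{-iy}-1|>\sqrt2$ because $\beta^{-iy}$ lies on the arc between $(-i)^{\pm1}$ and $-1$, and concludes by the triangle inequality that at least one $p_j$ satisfies $|p_j^{-iy}-1|>\tfrac1{\sqrt2}$. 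Your version buys an explicit description of where the good primes sit (indeed every integer in the good subrange works), while the paper's avoids locating the good arc at all. One small correction to your bookkeeping: an interval of length $\tfrac12$ whose single bad component (of length $2\delta_0$) lies in its interior meets the good region in \emph{two} pieces of total length $\tfrac12-2\delta_0$, not necessarily one subinterval of that length; one piece still has length at least $\tfrac14-\delta_0>0$, so your constant $c_0$ merely shrinks by a factor of $2$ and the argument is unaffected (and, as you implicitly use via $\delta_0<\tfrac14$, an interval of length $\tfrac12$ cannot meet two distinct bad components, since consecutive components are separated by gaps of length $1-2\delta_0>\tfrac12$).
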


\begin{proof}
Let $\alpha\defeq\er^{\pi/(4|y|)}$, which exceeds one.
Put $\eps\defeq |y|/y\in\{\pm 1\}$. For any 
$\beta\in(\alpha^2,\alpha^4)$, the number $\beta^{-iy}$
lies on the unit circle in $\C$ between $(\alpha^2)^{-iy}=(-i)^\eps$
and $(\alpha^4)^{-iy}=-1$; therefore,
$|\beta^{-iy}-1|>\sqrt{2}$.

By the Prime Number Theorem, for any  large
$t$ (depending on $y$), there exist primes
$p_1\in(t,\alpha t)$ and $p_2\in(\alpha^3t,\alpha^4t)$. The
number $\beta\defeq p_2/p_1$ lies in $(\alpha^2,\alpha^4)$,
and so by the preceding argument we have
\[
\big|p_2^{-iy}-p_1^{-iy}\big|=
\big|p_1^{-iy}(\beta^{-iy}-1)\big|>\sqrt{2}.
\]
This implies that $|p_j^{-iy}-1|>\frac{1}{\sqrt{2}}$
for at least one of the primes $p_j$.
\end{proof}

\begin{proof}[Proof of Theorem~\ref{thm:main}]

We apply Theorem~\ref{thm:main2} with the choices
$A\defeq 2C$, $T_1\defeq T$
and $T_2\defeq T(1+\eps)$, where
$\eps\defeq \exp(\tfrac{-C\log T}{\log\log T})$.
Then $\Delta\defeq\eps T$, $\bfT\defeq T+2\Delta$,
and
\[
\log\bfT\sim\log T
\mand
\sL^{-A}=\exp\big(\tfrac{(-2C+o(1))\log T}{\log\log T}\big)
\quad(T\to\infty).
\]
Clearly, $\bfT\sL^{-A}=o(\Delta\log\bfT)$
as $T\to\infty$. According to Lemma~\ref{lem:prime find},
there is a prime $x$ satisfying \eqref{eq:prime range} such
that $|x^{-iy}-1|>\frac{1}{\sqrt{2}}$. For any such prime~$x$,
the main term in \eqref{eq:super} dominates the error term,
and thus we have
\[
\ssum{\rho=\frac12+i\gamma\\T_1<\gamma<T_2}x^\rho\zeta(\rho+iy)\ne 0.
\]
The theorem follows.
\end{proof}

\subsection{Bounds on various sums}

\begin{lemma}\label{lem:series bound}
Let $y\in\R$, $x\ge 10$, and put
$c\defeq 1+\tfrac{1}{\log x}$. Then
\be\label{eq:S}
\sum_{n\ne x}\frac{|D_y(n)|}{n^c|\log(x/n)|}\ll \log^2x
\ee
\end{lemma}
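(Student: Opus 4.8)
The plan is to split the sum in \eqref{eq:S} according to the size of $n$ relative to $x$, treating the terms with $n$ close to $x$ separately from the "bulk" terms where $|\log(x/n)|$ is bounded below by an absolute constant. First I would record the crude bound $|D_y(n)|\le\sum_{ab=n}\Lambda(a)=\Lambda_2(n)\ll(\log n)\,d(n)$, or more simply $|D_y(n)|\ll\log^2 n$ on average; the point is that $|D_y(n)|$ is majorized by a function whose Dirichlet series behaves like $(\zeta(s))^{?}$ up to logarithmic factors, so that $\sum_{n\le N}|D_y(n)|\ll N\log N$ by a standard divisor-type estimate. This average bound is what will power the whole argument.

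For the \emph{bulk terms}, say those $n$ with $n\le x/2$ or $n\ge 2x$, we have $|\log(x/n)|\gg 1$, so the contribution is $\ll\sum_{n}|D_y(n)|n^{-c}$. Since $c=1+\tfrac{1}{\log x}$, partial summation against the average bound $\sum_{n\le N}|D_y(n)|\ll N\log N$ gives $\sum_n|D_y(n)|n^{-c}\ll\log^2 x$ (the truncation at scale $N\asymp e^{1/(c-1)}=e^{\log x}=x$ is exactly where the two logarithms come from). For the \emph{nearby terms}, with $x/2<n<2x$ and $n\ne x$, I would bound $n^{-c}\asymp x^{-1}$ and $|D_y(n)|\ll\log^2 x$ uniformly in this short range, reducing matters to $x^{-1}\log^2 x\sum_{x/2<n<2x,\,n\ne x}|\log(x/n)|^{-1}$. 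Writing $n=x+k$ with $0<|k|<x$ gives $|\log(x/n)|=|\log(1+k/x)|\asymp |k|/x$, so the sum over $k$ is $\ll\sum_{1\le|k|<x}x/|k|\ll x\log x$; multiplying by $x^{-1}\log^2 x$ yields $\ll\log^3 x$. To recover the sharper exponent $\log^2 x$ claimed in \eqref{eq:S}, I would instead use the \emph{average} size of $|D_y(n)|$ over the dyadic block rather than the pointwise bound: $\sum_{x/2<n<2x}|D_y(n)|\ll x\log x$, and pair this with the observation that the weights $|\log(x/n)|^{-1}$ are largest precisely where there are few terms, so a dyadic decomposition of the range $|k|\in[2^j,2^{j+1})$ combined with $\sum_{n\in I}|D_y(n)|\ll |I|\log x$ on each such block again gives $\ll\log^2 x$ after summing the geometric-in-$j$ contributions.

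The main obstacle will be the nearby range $x/2<n<2x$: getting the exponent down from the easy $\log^3 x$ to the claimed $\log^2 x$ requires using cancellation-free but \emph{averaged} information about $D_y(n)$ on short intervals, i.e. a Shiu-type or Brun–Titchmarsh-type bound $\sum_{n\in(u,u+v]}\Lambda_2(n)\ll v\log u$ valid for $v$ as small as a power of $u$, rather than the trivial termwise estimate. Everything else—the tail bound via partial summation and the choice $c-1=1/\log x$—is routine. I would therefore structure the proof as: (i) establish the majorant $|D_y(n)|\le\Lambda_2(n)$ and the short-interval average bound; (ii) dispose of $|\log(x/n)|\gg1$ by $\sum|D_y(n)|n^{-c}\ll\log^2 x$; (iii) handle $x/2<n<2x$ by a dyadic decomposition in $|n-x|$ using the short-interval bound, which is where the real work lies.
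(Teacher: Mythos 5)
The decisive fact you are missing is that $\sum_{ab=n}\Lambda(a)=\sum_{d\mid n}\Lambda(d)=\log n$ \emph{exactly} (Chebyshev's identity); this quantity is not $\Lambda_2(n)$, and it is not merely $\ll(\log n)d(n)$. Hence $|D_y(n)|\le\log n$ pointwise, which is the whole engine of the paper's proof: for $n\le\frac23x$ or $n\ge\frac43x$ one has $|\log(x/n)|\gg1$ and the contribution is $\ll\sum_n(\log n)n^{-c}=-\zeta'(c)\asymp(c-1)^{-2}=\log^2x$ (your partial-summation version of this step is fine and equivalent), while for $\frac23x<n<\frac43x$ the pointwise bound $|D_y(n)|\le\log n\ll\log x$ combined with the elementary estimate
\[
\sum_{\frac23x<n<x}\frac{1}{n\log(x/n)}+\sum_{x<n<\frac43x}\frac{1}{n\log(n/x)}\ll\log x
\]
(your own observation $|\log(x/n)|\asymp|n-x|/x$ gives exactly these harmonic sums) already yields $\ll\log^2x$. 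No short-interval or sieve input is needed anywhere; the lemma is entirely elementary.

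Your proposed repair of the near range, as written, does not close. Having replaced the majorant $\log n$ by something you allow to be as large as $\log^2 n$ pointwise (e.g.\ the true $\Lambda_2(n)$ equals $\log^2 n$ at primes), you need $\sum_{n\in I}|D_y(n)|\ll|I|\log x$ on every dyadic block $I=\{n:|n-x|\asymp 2^j\}$, including blocks of bounded length. Shiu/Brun--Titchmarsh-type bounds require $|I|\ge x^{\varepsilon}$, so for the blocks with $2^j$ below any fixed power of $x$ (in particular $|n-x|=O(1)$) no averaging is available under your stated toolkit, and the pointwise bound $\log^2x$ there only returns an overall $\log^3x$ for that portion of the sum. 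The only way to rescue those blocks is the exact pointwise evaluation $|D_y(n)|\le\log n$ --- at which point the dyadic decomposition and short-interval machinery become superfluous and the argument collapses to the paper's short treatment.
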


\begin{proof}
The proof follows standard arguments.
We begin by observing that
\be\label{eq:Dyn upper bound}
\big|D_y(n)\big|=\bigg|-\sum_{ab=n}\Lambda(a)b^{-iy}\bigg|\le
\sum_{ab=n}\Lambda(a)=\log n\qquad(n\in\N).
\ee

Let $S$ denote the sum on the left side of \eqref{eq:S}.
For positive integers with $n\le\frac23x$ or $n\ge\frac43x$, 
we have $|\log(x/n)|\gg 1$; using \eqref{eq:Dyn upper bound}, 
the total contribution to $S$ from such integers is
\[
\ll\sum_n\frac{\log n}{n^c}
=-\zeta'(c)\asymp(c-1)^{-2}=\log^2x,
\]
which is acceptable. For integers $\frac23x<n<\frac43x$, we have
$D_y(n)\ll \log x$ by~\eqref{eq:Dyn upper bound}, and in this range we also have
$n^c\asymp n$. Thus, to finish the proof, it suffices to observe that both sums
\[
S_1\defeq\sum_{\frac23x<n<x}\frac{1}{n\log(x/n)}\mand
S_2\defeq\sum_{x<n<\frac43x}\frac{1}{n\log(n/x)}
\]
are of size $O(\log x)$, as one verifies using
standard arguments.
\end{proof}

\begin{lemma}\label{lem:summation bound}
Assume RH.
Let $y\ne 0$, $t>t'\ge 10$, and put $\Delta'\defeq t-t'$.
For any $\kappa$ in the range $\er\le \kappa\le t/\er$, we have
\[
\sum_{t'<n\le t}D_y(n)n^{iy}
\ll\Delta'\big\{|y|^{-1}+(\kappa/t)^{1/2}\log^2(t/\kappa+|y|)
+\log \kappa\big\}+t/\kappa
+(t\kappa)^{1/2}\log^2t.
\]
\end{lemma}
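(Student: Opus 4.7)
The plan is to apply the Dirichlet hyperbola method combined with the standard RH-based estimate
\[
\psi_y(v) := \sum_{n \le v}\Lambda(n) n^{iy} = \frac{v^{1+iy}}{1+iy} + O\bigl(v^{1/2}\log^2(v+|y|)\bigr),
\]
derivable from the truncated explicit formula for $-\zeta'/\zeta(s-iy)$. First I would use the identity $D_y(n) n^{iy} = -\sum_{ab=n}\Lambda(a) a^{iy}$ (immediate from the definition of $D_y$) to rewrite the target as $-\sum_{t' < ab \le t}\Lambda(a) a^{iy}$, and then apply the hyperbola identity
\[
\sum_{ab \le N}\Lambda(a) a^{iy} = \sum_{a \le \kappa}\Lambda(a) a^{iy}\lfloor N/a\rfloor + \sum_{b \le N/\kappa}\bigl[\psi_y(N/b) - \psi_y(\kappa)\bigr]
\]
at both $N=t$ and $N=t'$. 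Differencing yields three natural pieces: a small-$a$ sum $(I)$, a long $b$-sum $(II)$ over $b \le t'/\kappa$ of $\psi_y(t/b) - \psi_y(t'/b)$, and a boundary sum $(III)$ over $t'/\kappa < b \le t/\kappa$ of $\psi_y(t/b) - \psi_y(\kappa)$.

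Pieces $(I)$ and $(III)$ should be routine. For $(I)$, using $\lfloor t/a\rfloor - \lfloor t'/a\rfloor = \Delta'/a + O(1)$ together with the elementary $\sum_{a \le \kappa}\Lambda(a)/a \ll \log\kappa$ and $\psi(\kappa) \ll \kappa$ yields $|(I)| \ll \Delta'\log\kappa + \kappa$. For $(III)$, the argument $t/b$ is confined to the thin interval $[\kappa,\kappa(1+\Delta'/t)]$, so $|\psi_y(t/b) - \psi_y(\kappa)| \ll \kappa\Delta'/t$ by Chebyshev, and summing over the $O(\Delta'/\kappa)$ terms gives $O(\Delta'^2/t + \kappa)$. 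Both contributions are absorbed by the target bound since $\kappa \le (t\kappa)^{1/2}$ and $\Delta'^2/t \le \Delta'$.

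The main analytical work is $(II)$. Substituting the expansion for $\psi_y$ separates this into a main contribution
\[
\frac{t^{1+iy} - t'^{1+iy}}{1+iy}\,\sum_{b \le t'/\kappa} b^{-1-iy}
\]
and an error sum $\sum_b O\bigl((t/b)^{1/2}\log^2(t/b+|y|)\bigr)$. The leading factor above equals $\int_{t'}^{t} v^{iy}\,dv$, bounded by $\min(\Delta', 2t/|y|)$; paired with the partial-summation estimate $|\sum_{b \le X} b^{-1-iy}| \ll 1/|y| + \log(2+|y|)$ (which follows from the near-limit value $\zeta(1+iy)$ together with Euler--Maclaurin), this produces the main $\Delta'/|y|$ and $\Delta'\log\kappa$ contributions of the target.

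The hardest step is the error part of $(II)$: a termwise triangle inequality yields only $t\kappa^{-1/2}\log^2(t/\kappa+|y|)$, which is weaker than the stated $(t\kappa)^{1/2}\log^2 t$ when $\kappa<\sqrt t$. To obtain the sharper bound I would expand the error via the truncated explicit formula, writing it as $-\sum_\rho\frac{t^{\rho+iy}}{\rho+iy}\sum_b b^{-\rho-iy}$ with $\rho = \tfrac12+i\gamma$ running over nontrivial zeros of $\zeta$, bounding each inner $b$-sum by $X^{1/2}/(1+|\gamma+y|)$ via an integral approximation, and summing over zeros using the standard RH density $\sum_{|\gamma+y-T|\le 1} 1 \ll \log(T+|y|+2)$. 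The principal obstacle will be the careful bookkeeping needed to consolidate all $|y|$-dependence into the stated $\log^2(t/\kappa+|y|)$ factor (with no stray polynomial-in-$|y|$ factors surviving) and to separate cleanly the boundary contribution $\Delta'(\kappa/t)^{1/2}\log^2(\cdots)$ from the zero-sum contribution $(t\kappa)^{1/2}\log^2 t$.
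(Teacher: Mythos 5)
Your overall framework is right (the identity $D_y(n)n^{iy}=-\sum_{ab=n}\Lambda(a)a^{iy}$, the RH estimate for $\psi_y$, von Koch/Chebyshev for the untwisted part), and you have correctly diagnosed that your decomposition leaves an error term that a termwise bound cannot handle. But the difficulty you then try to overcome is an artifact of where you placed the hyperbola: you cut at $a\le\kappa$ versus $b\le N/\kappa$, so the expensive estimate for $\psi_y$ is incurred once for each of the $\asymp t/\kappa$ values of $b$. The paper instead splits $\sum_{t'<ab\le t}$ at $a\le t/\kappa$ versus $a>t/\kappa$. For $a\le t/\kappa$ no $\psi_y$ is needed at all: the inner $b$-count is $\lfloor t/a\rfloor-\lfloor t'/a\rfloor=\Delta'/a+O(1)$, so the whole piece reduces to $\Delta'\sum_{a\le t/\kappa}\Lambda(a)a^{iy}a^{-1}+O(t/\kappa)$, and the twisted RH input enters exactly once, in the partial-summation form $\sum_{a\le X}\Lambda(a)a^{iy}a^{-1}\ll|y|^{-1}+X^{-1/2}\log^2(X+|y|)+1$; this is the source of the terms $\Delta'|y|^{-1}+\Delta'(\kappa/t)^{1/2}\log^2(t/\kappa+|y|)+t/\kappa$. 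For $a>t/\kappa$ one necessarily has $b<\kappa$, the oscillation $a^{iy}$ is discarded entirely, and von Koch is applied only $O(\kappa)$ times, giving $\Delta'\log\kappa+(t\kappa)^{1/2}\log^2 t$. With that split the lemma is a half-page computation and your ``hardest step'' never arises.

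The rescue you propose for your piece $(II)$ has a genuine gap. The inner sum $\sum_{b\le X}b^{-1/2-i(\gamma+y)}$ is \emph{not} $\ll X^{1/2}/(1+|\gamma+y|)$: Euler--Maclaurin (or the approximate functional equation) gives $\zeta(\tfrac12+i(\gamma+y))+\frac{X^{1/2-i(\gamma+y)}}{1/2-i(\gamma+y)}+O(X^{-1/2})$ for $|\gamma+y|\ll X$, and the $\zeta(\tfrac12+i(\gamma+y))$ term dominates your claimed bound as soon as $|\gamma+y|\gg X^{1/2}$. Feeding this back into the zero sum forces you to control $\sum_{\rho}|\rho+iy|^{-1}\,t^{1/2}\,|\zeta(\tfrac12+i(\gamma+y))|$ over zeros up to height $\asymp t$; under RH the pointwise bound $|\zeta(\tfrac12+iu)|\le\exp(O(\log u/\log\log u))$ yields only $t^{1/2}\exp(O(\log t/\log\log t))$, which is \emph{not} $\ll(t\kappa)^{1/2}\log^2t$ for small $\kappa$, and doing better requires discrete moment estimates for $\zeta$ at shifted zeros --- essentially the quantity the whole paper is constructed to study. (There are also unaddressed truncation issues in the explicit formula and the regime $|\gamma+y|>X$.) So the route you sketch for the error in $(II)$ does not close; the correct move is to change the decomposition as above rather than to fight this sum.
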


\begin{proof}
A well known result of von Koch \cite{vonKoch}
asserts that, under RH, one has
\be\label{eq:vonKoch}
\sum_{n\le x}\Lambda(n)=x+O\big(x^{1/2}(\log x)^2\big)\qquad(x\ge 2).
\ee
More generally, building on work of Gonek, Graham,
and Lee~\cite{GoGrLe}, the author and Sinha~\cite{BaSi}
have shown that the Riemann Hypothesis is true if and only if
the following uniform estimate holds:
\[
\sum_{n\le x}\Lambda(n)n^{iy}=\frac{x^{1+iy}}{1+iy}
+O\big(x^{1/2}\log^2(x+|y|)\big)\qquad(y\in\R,~x\ge 2).
\]
By partial summation, this implies
\be\label{eq:GGLps}
\sum_{n\le x}\frac{\Lambda(n)n^{iy}}{n}
\ll |y|^{-1}+x^{-1/2}\log^2(x+|y|)+1\qquad(y\ne 0,~x\ge 2)
\ee
(and the sum is zero for $x\in(0,2]$). Therefore, writing
\[
D_y(n)n^{iy}=-n^{iy}
\sum_{ab=n}\Lambda(a)b^{-iy}
=-\sum_{ab=n}\Lambda(a)a^{iy},
\]
we have
\[
\sum_{t'<n\le t}D_y(n)n^{iy}
=-\sum_{t'<ab\le t}\Lambda(a)a^{iy}
=-\ssum{t'<ab\le t\\a\le t/\kappa}\Lambda(a)a^{iy}
-\ssum{t'<ab\le t\\a>t/\kappa}\Lambda(a)a^{iy}.
\]
By \eqref{eq:GGLps} and the Chebyshev bound,
\dalign{
\ssum{t'<ab\le t\\a\le t/\kappa}\Lambda(a)a^{iy}
&=\sum_{a\le t/\kappa}\Lambda(a)a^{iy}
\bigg(\fl{\frac{t}{a}}-\fl{\frac{t'}{a}}\bigg)
=\Delta'\sum_{a\le t/\kappa}\frac{\Lambda(a)a^{iy}}{a}+O(t/\kappa)\\
&\ll\Delta'\big\{|y|^{-1}+(\kappa/t)^{1/2}
\log^2(t/\kappa+|y|)+1\big\}+t/\kappa.
}
On other hand, using \eqref{eq:vonKoch} we have
\dalign{
\ssum{t'<ab\le t\\a>t/\kappa}\Lambda(a)a^{iy}
&=\sum_{b\le \kappa}\ssum{t'/b<a\le t/b\\a>t/\kappa}\Lambda(a)a^{iy}
\ll\sum_{b\le \kappa}\sum_{t'/b<a\le t/b}\Lambda(a)\\
&=\sum_{b\le \kappa}
\bigg\{\frac{\Delta'}{b}+O\big((t/b)^{1/2}\log^2(t/b)\big)\bigg\}\\
&\ll\Delta'\log \kappa+(t\kappa)^{1/2}\log^2(t/\kappa).
}
Combining the two bounds, the lemma follows.
\end{proof}

\begin{lemma}\label{lem:technical bound}
Let $t\ge 10\,x\ge 100$, and put $c\defeq 1+\frac{1}{\log x}$. Then
\be\label{eq:puzzle}
\sum_{n\ge 2}\frac{\log n}{n^c(|t-2\pi nx|+t^{1/2})}
\ll\frac{(x+t^{1/2}\log t)\log t}{t^{c+1/2}}.
\ee
\end{lemma}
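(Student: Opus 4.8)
The plan is to split the sum at the point where the linear form $t-2\pi nx$ vanishes, namely near $n_0\defeq t/(2\pi x)$, and to handle separately the ``bulk'' terms (where $|t-2\pi nx|\gg t$) and the ``resonant'' terms (where $n$ is close to $n_0$). Write $\log n/n^c\ll \log t/n$ for $2\le n\le 2n_0$ (using $n^c\asymp n$ in that range, since $n\le 2n_0\le t/(\pi x)\ll t$ and $\log x\asymp \log n$ up to the range where $n\gg x$; more carefully, $n^c=n\cdot n^{1/\log x}\ll n\cdot e^{(\log n)/\log x}$, which is $\ll n$ for $n\le x^{O(1)}$ and can be absorbed otherwise), and $\log n/n^c\ll \log t\, /\, n^c$ throughout.

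First I would treat the tail $n>2n_0$. There $|t-2\pi nx|\gg nx$, so the denominator is $\gg nx$, and the contribution is
\[
\ll \sum_{n>2n_0}\frac{\log n}{n^c\cdot nx}
\ll\frac{1}{x}\sum_{n>2n_0}\frac{\log n}{n^{c+1}}
\ll\frac{\log n_0}{x\,n_0^{c}}
\ll\frac{\log t}{x}\cdot\Big(\frac{x}{t}\Big)^{c}
\ll\frac{\log t}{t}\cdot\frac{x^{c-1}}{1}\cdot\frac{x}{t^{\,c-1}}\,,
\]
which, after bounding $x^{c-1}\ll 1$ (as $x^{1/\log x}=e$) and $t^{c-1/2}=t^{1/2}\cdot t^{c-1}\gg t^{1/2}$, is comfortably $\ll x\log t/t^{c+1/2}$, i.e. within the stated bound. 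Next, for the small terms $2\le n\le n_0/2$ (say) one has $|t-2\pi nx|\gg t$, so the denominator is $\gg t$ and the contribution is $\ll t^{-1}\sum_{n\le n_0/2}\log n/n^c\ll t^{-1}\,(-\zeta'(c))\ll t^{-1}\log^2 x\ll t^{-1}\log^2 t$; since $t^{c+1/2}\ll t^{3/2}\cdot t^{c-1}$ and $t^{c-1}\ll \log t$ trivially fails, one instead observes $t^{c-1}=e^{(\log t)/\log x}$ which is $\le t^{o(1)}$ only if $\log x\gg \log t$ — so here I must be more careful and simply note $t^{-1}\log^2 t\ll t^{-c-1/2}\cdot t^{c-1/2}\log^2 t$ and check $t^{c-1/2}\log^2 t \ll (x+t^{1/2}\log t)\log t$, i.e. $t^{c-1/2}\log t\ll t^{1/2}\log t$ after noting $t^{c-1}\ll 1$ when $\log x\gg\log t$ and otherwise absorbing into the $x$ term; this bookkeeping is routine but fussy.

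The heart of the matter is the resonant range $n_0/2<n<2n_0$, where $|t-2\pi nx|$ can be as small as $O(1)$. Here $n\asymp n_0=t/(2\pi x)$, so $\log n/n^c\asymp \log t/n_0^{c}\asymp (\log t)(x/t)^{c}$, and it remains to bound
\[
\sum_{n_0/2<n<2n_0}\frac{1}{|t-2\pi nx|+t^{1/2}}\,.
\]
As $n$ runs over this dyadic block, the values $2\pi nx$ form an arithmetic progression with common difference $2\pi x$; the number of $n$ for which $|t-2\pi nx|\le t^{1/2}$ is $\ll 1+t^{1/2}/x$, each contributing $\ll t^{-1/2}$, and the remaining $n$ contribute $\ll \sum_{k\ge 1}(kx+t^{1/2})^{-1}\ll x^{-1}\log t + t^{-1/2}\cdot O(1)$ after grouping by the size of $|t-2\pi nx|\asymp kx$. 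Altogether this inner sum is $\ll t^{-1/2}+x^{-1/2}\cdot\dots$ — more cleanly, $\ll (1+t^{1/2}/x)t^{-1/2}+x^{-1}\log t\ll t^{-1/2}+x^{-1}\log t$. Multiplying by $(\log t)(x/t)^{c}$ gives $\ll (\log t)(x/t)^c\big(t^{-1/2}+x^{-1}\log t\big)$, and since $(x/t)^c=x^{c}/t^{c}$ with $x^c\ll x$ and $x^{c}/x\ll 1$, this is $\ll (x\,t^{-1/2}+\log t)\,t^{-c}\log t\ll (x+t^{1/2}\log t)\log t\,/\,t^{c+1/2}$, matching the claimed bound. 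The main obstacle, and the step deserving the most care, is precisely this counting of near-resonances: one must verify that the progression $\{2\pi nx\}$ hits the window $[t-t^{1/2},t+t^{1/2}]$ in $O(1+t^{1/2}/x)$ points and that the off-resonance terms sum geometrically against the gap $2\pi x$, which is exactly where the two terms $x$ and $t^{1/2}\log t$ in the numerator on the right of~\eqref{eq:puzzle} come from.
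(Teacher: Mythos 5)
Your decomposition is the same as the paper's: the paper splits the sum into $S_1=\{|n-X|\le Y\}$, $S_2=\{Y<|n-X|\le 2X\}$, $S_3=\{|n-X|>2X\}$ with $X=t/(2\pi x)$ and $Y=t^{1/2}/(2\pi x)$, which is exactly your resonant window, bulk, and tail around $n_0=X$. Your treatment of the resonant block is correct and is indeed the source of the two terms in the numerator of \eqref{eq:puzzle}: there are $O(1+t^{1/2}/x)$ values of $n$ with $|t-2\pi nx|\le t^{1/2}$, each contributing $\ll t^{-1/2}$, and the off-resonance terms form a harmonic sum $\sum_{k}(kx+t^{1/2})^{-1}\ll x^{-1}\log t$ against the gap $2\pi x$. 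The tail $n>2n_0$ is also fine (its contribution is $\ll \log t/t^{c}$, which sits inside the $t^{1/2}\log t$ term rather than the $x$ term, but that is immaterial).

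However, the step you describe as ``routine but fussy bookkeeping''---the range $2\le n\le n_0/2$---is a genuine gap, and it cannot be closed, because \eqref{eq:puzzle} as stated is false in part of the range $t\ge 10x\ge 100$. The single term $n=2$ is already $\asymp \log 2/(2^c t)\asymp t^{-1}$ (say for $x\le t/100$), while the right-hand side of \eqref{eq:puzzle} is $\asymp x\log t/t^{c+1/2}+\log^2 t/t^{c}$. Dominating $t^{-1}$ by the second term forces $t^{c-1}=\exp(\log t/\log x)\ll\log^2 t$, i.e.\ $\log x\gg\log t/\log\log t$; dominating it by the first forces $x\gg t^{c-1/2}/\log t\ge t^{1/2}/\log t$. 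For fixed $x$ and $t\to\infty$ both fail, so neither of your proposed absorptions can succeed. (The paper's own proof has the same weak point: in the bound for $S_2$ it replaces $\log n/n^{c}$ by $\log X/X^{c}$, an inequality that goes the wrong way for $n\ll X$.) Everything is correct in the only regime where the lemma is applied, namely $x\asymp t\,\sL^{-\Theta}$, where $\log x=(1+o(1))\log t$, hence $t^{c-1}\asymp 1$ and $n^{c}\asymp n$ for all $n\le X$; adding a hypothesis of this type (e.g.\ $t\le x^{B}$ with the implied constant depending on $B$) makes your argument go through. So the step deserving the most care is not the near-resonance count, which is standard, but the innocuous-looking small-$n$ range.
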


\begin{proof}
The sum on the left side of \eqref{eq:puzzle}
is equal to
\[
\bigg(\sum_{n\in S_1}+\sum_{n\in S_2}+\sum_{n\in S_3}\bigg)
\frac{\log n}{2\pi xn^c(|n-X|+Y)},
\]
where
\[
X\defeq\frac{t}{2\pi x},\qquad Y\defeq\frac{t^{1/2}}{2\pi x},
\]
and
\dalign{
S_1&\defeq\{n\ge 2:|n-X|\le Y\},\\
S_2&\defeq\{n\ge 2:Y<|n-X|\le 2X\},\\
S_3&\defeq\{n\ge 2:|n-X|>2X\}.
}
As $n\asymp X$ for each $n\in S_1$, and $|S_1|\ll Y+1$, the 
sum over $n\in S_1$ is
\[
\ll\frac{(\log X)(Y+1)}{xX^cY}
\ll\frac{(\log t)(x/t^{1/2}+1)}{t^c}.
\]
Since $n\ll X$ for each $n\in S_2$, the 
sum over $n\in S_2$ is
\[
\ll\frac{\log X}{xX^c}
\bigg(\sum_{2\le n<X-Y}+\sum_{X+Y<n\le 3X}\bigg)
\frac{1}{|n-X|+Y}
\ll\frac{\log X\log(X/Y)}{xX^c}\ll\frac{\log^2t}{t^c}.
\]
Finally, since $n>3X$ and $n-X>\frac12n$ for each $n\in S_3$,
the sum over $n\in S_3$ is
\[
\ll\frac{1}{x}\sum_{n>3X}\frac{\log n}{n^{c+1}}\ll\frac{\log X}{xX^c}
\ll\frac{\log t}{t^c}.
\]
Putting everything together, we obtain the stated bound.
\end{proof}

\subsection{The zeta function}

\begin{lemma}\label{lem:MV}
The estimate
\[
\frac{\zeta'}{\zeta}(s)=
\ssum{\rho=\beta+it\\|\gamma-t|<1}\frac{1}{s-\rho}
+O(\log|t|)
\]
holds uniformly for $-1\le\sigma\le 2$ and $|t|\ge 10$.
\end{lemma}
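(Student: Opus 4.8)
The plan is to derive this from the Hadamard factorization of $\zeta(s)$ together with the standard bound on the number of zeros in a unit-height window. Recall that the completed zeta function $\xi(s) = \frac12 s(s-1)\pi^{-s/2}\Gamma(s/2)\zeta(s)$ is entire of order one, so Hadamard's theorem gives
\[
\frac{\xi'}{\xi}(s) = B + \sum_\rho\left(\frac{1}{s-\rho}+\frac{1}{\rho}\right),
\]
the sum being over all nontrivial zeros $\rho$ of $\zeta$. Taking logarithmic derivatives through the definition of $\xi$ converts this into
\[
\frac{\zeta'}{\zeta}(s) = B - \frac12\log\pi + \frac12\frac{\Gamma'}{\Gamma}\!\left(\tfrac{s}{2}+1\right) - \frac{1}{s-1} + \sum_\rho\left(\frac{1}{s-\rho}+\frac{1}{\rho}\right).
\]
By Stirling's formula, $\frac{\Gamma'}{\Gamma}(\tfrac{s}{2}+1) = O(\log|t|)$ uniformly for $-1\le\sigma\le 2$ and $|t|\ge 10$, and clearly $\frac{1}{s-1}=O(1)$ in this range; so the task reduces to showing
\[
\sum_\rho\left(\frac{1}{s-\rho}+\frac{1}{\rho}\right) = \ssum{\rho=\beta+i\gamma\\|\gamma-t|<1}\frac{1}{s-\rho} + O(\log|t|).
\]

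To handle the zero-sum, the standard trick is to subtract off the value at the nearby point $s_0 \defeq 2+it$. Since $\frac{\zeta'}{\zeta}(2+it) = O(1)$ (the Dirichlet series converges absolutely there), writing out the Hadamard expression at both $s$ and $s_0$ and subtracting kills the constants $B$, $-\tfrac12\log\pi$, and the $\tfrac1\rho$ terms, leaving
\[
\frac{\zeta'}{\zeta}(s) = \sum_\rho\left(\frac{1}{s-\rho}-\frac{1}{s_0-\rho}\right) + O(\log|t|),
\]
where I have absorbed $\frac{\Gamma'}{\Gamma}$-difference and the $\frac{1}{s-1}$ terms into the error using Stirling again. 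Now split the sum according to whether $|\gamma - t| < 1$ or $|\gamma - t|\ge 1$. For the near zeros, $\big|\frac{1}{s_0-\rho}\big| = \big|\frac{1}{2-\beta + i(t-\gamma)}\big| \le 1$ since $2-\beta \ge 1$, and the number of such $\rho$ is $O(\log|t|)$ by the classical density estimate $N(T+1)-N(T) = O(\log T)$; so these terms contribute $\ssum{|\gamma-t|<1}\frac{1}{s-\rho} + O(\log|t|)$, which is exactly what we want. For the far zeros, combine the two fractions: $\frac{1}{s-\rho}-\frac{1}{s_0-\rho} = \frac{s_0 - s}{(s-\rho)(s_0-\rho)}$, and $|s_0 - s| = |2-\sigma| \le 3$, while $|s-\rho|\gg |\gamma - t|$ and $|s_0-\rho|\gg |\gamma-t|$ when $|\gamma - t|\ge 1$; so each such term is $O\big(|\gamma-t|^{-2}\big)$, and summing over all zeros with $|\gamma-t|\ge 1$ via the density estimate $\sum_{n\ge 1} n^{-2}\cdot O(\log(|t|+n))$ gives a total of $O(\log|t|)$.

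The only mildly delicate point is the uniformity: one must check that every implied constant above is genuinely independent of $\sigma\in[-1,2]$ and of $t$ once $|t|\ge 10$, but this is automatic because the only $\sigma$-dependence enters through bounded quantities ($2-\sigma$, $2-\beta$, the argument of $\Gamma'/\Gamma$ shifted by a bounded amount). The main obstacle — to the extent there is one — is simply assembling the standard ingredients (Hadamard factorization, Stirling, the $N(T+1)-N(T) = O(\log T)$ bound) and bookkeeping the comparison at $s_0 = 2+it$ carefully; there is no new idea required, as this is a classical result (see, e.g., Titchmarsh~\cite{Titchmarsh}).
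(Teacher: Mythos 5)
Your argument is correct and is precisely the classical proof of this estimate; the paper itself gives no proof, simply citing Montgomery and Vaughan~\cite[Lemma~12.1]{MontVau}, whose proof proceeds exactly as you do (Hadamard factorization of $\xi$, comparison with $s_0=2+it$, and the $N(T+1)-N(T)=O(\log T)$ density bound to control near and far zeros). No discrepancy to report.
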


\begin{proof}
See, e.g., Montgomery and Vaughan~\cite[Lemma~12.1]{MontVau}.
\end{proof}

\begin{lemma}\label{lem:MV2}
Assume RH. There is an absolute constant $\lambda>0$ such that
\be\label{eq:lambdabd}
\big|\zeta(s)\big|\le
\exp\bigg(\frac{\lambda\log|t|}{\log\log|t|}\bigg)
\ee
holds uniformly for $\sigma\ge\frac12-\frac{1}{\log\log|t|}$
with $|t|\ge 10$.
\end{lemma}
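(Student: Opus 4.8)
The plan is to split the range of $\sigma$ into three pieces: $\sigma\ge 1$, where the bound is unconditional and immediate; $\tfrac12\le\sigma\le 1$, which is Littlewood's classical conditional estimate; and the thin strip $\tfrac12-\tfrac{1}{\log\log|t|}\le\sigma<\tfrac12$, which one reduces to the critical line. Since $\zeta(\bar s)=\overline{\zeta(s)}$ we may assume $t\ge 10$. For $\sigma\ge 1$ the one-term Euler--Maclaurin formula gives $\zeta(\sigma+it)=\sum_{n\le t}n^{-s}+O(1)$ uniformly, hence $|\zeta(\sigma+it)|\ll\log t$ and $\log|\zeta(\sigma+it)|\le\log\log t+O(1)$, well within \eqref{eq:lambdabd}. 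For $\tfrac12\le\sigma\le 1$ we invoke, under RH, Littlewood's bound
\[
|\zeta(\sigma+it)|\le\exp\!\bigg(\frac{\lambda_0(\log t)^{2-2\sigma}}{\log\log t}\bigg)\qquad(\tfrac12\le\sigma\le 1,\ t\ge 10),
\]
with $\lambda_0>0$ absolute; see Titchmarsh~\cite[\S14.2 and Theorem~14.14]{Titchmarsh}. (For a self-contained treatment, the case $\sigma=\tfrac12$ follows from the explicit formula for $\log\zeta$ smoothed by a suitable kernel, bounding the resulting prime sum trivially with a cutoff of size $\log t$; the range $\tfrac12<\sigma\le1$ is then easier.) Since $(\log t)^{2-2\sigma}\le\log t$ for $\sigma\ge\tfrac12$, this already establishes \eqref{eq:lambdabd} with $\lambda=\lambda_0$ on the part of the range with $\sigma\ge\tfrac12$.

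It remains to treat $\tfrac12-\tfrac{1}{\log\log t}\le\sigma<\tfrac12$. By a routine limiting argument (the ordinates of zeros being discrete, while $\zeta$ is continuous) we may assume $t$ is not the ordinate of a zero; then, under RH, $\zeta$ has no zero with imaginary part $t$ and real part in $[\tfrac12-\tfrac{1}{\log\log t},\tfrac32]$, so $\log\zeta$ has a continuous branch along the relevant horizontal segments. Put $\delta'\defeq\tfrac12-\sigma\in(0,\tfrac{1}{\log\log t}]$. Since $\tfrac{d}{du}\log|\zeta(u+it)|=\Re\tfrac{\zeta'}{\zeta}(u+it)$ on $[\tfrac12-\delta',\tfrac12+\delta']$, we get
\[
\log|\zeta(\sigma+it)|=\log\big|\zeta(\tfrac12+\delta'+it)\big|-\int_{1/2-\delta'}^{1/2+\delta'}\Re\frac{\zeta'}{\zeta}(u+it)\,du.
\]
By Lemma~\ref{lem:MV} together with RH (so every relevant zero is $\rho=\tfrac12+i\gamma$ and $u+it-\rho=(u-\tfrac12)+i(t-\gamma)$),
\[
\Re\frac{\zeta'}{\zeta}(u+it)=\ssum{\rho=\frac12+i\gamma\\|\gamma-t|<1}\frac{u-\tfrac12}{(u-\tfrac12)^2+(t-\gamma)^2}+O(\log t).
\]
Each summand is an odd function of $u-\tfrac12$, hence integrates to $0$ over the symmetric interval, so the integral above is $O(\delta'\log t)=O(\log t/\log\log t)$. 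Combining this with the bound already proved for $\sigma=\tfrac12+\delta'$ yields $|\zeta(\sigma+it)|\le\exp(\lambda\log t/\log\log t)$ for a suitable absolute $\lambda$, and the lemma follows.

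The only genuine difficulty is the middle range: the critical-line bound lies deeper than Lemma~\ref{lem:MV}. Integrating the $O(\log t)$ error of that lemma over a $\sigma$-interval of length $\Theta(1)$ produces only $O(\log t)$, i.e.\ $|\zeta(\tfrac12+it)|\ll t^{O(1)}$---far from the target---so the sharp input must come from Littlewood's argument (or the explicit-formula method), not from the lemmas developed above. By contrast, over the strip of width $1/\log\log t$ in the third step the same error integrates to the admissible $O(\log t/\log\log t)$, which is exactly why passing to the left of the critical line is cheap. That passage could equally be carried out via the functional equation $\zeta(s)=\chi(s)\zeta(1-s)$ and the standard estimate $|\chi(\sigma+it)|\asymp(t/2\pi)^{1/2-\sigma}$ (uniform for $\sigma$ in a bounded range), which reduces $\sigma<\tfrac12$ directly to $1-\sigma>\tfrac12$ at the cost of a factor $\exp(O(\delta'\log t))$, giving the same conclusion.
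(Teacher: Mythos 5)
Your proof is correct, and it rests on the same essential conditional input as the paper's: the classical Littlewood/Lindel\"of-type bound $|\zeta(\sigma+it)|\le\exp\big(O(\log|t|/\log\log|t|)\big)$ for $\sigma\ge\tfrac12$ under RH, which you cite from Titchmarsh and the paper cites from Montgomery--Vaughan (Theorem~13.18). Where you genuinely diverge is in crossing the thin strip $\tfrac12-\tfrac1{\log\log|t|}\le\sigma<\tfrac12$: the paper applies the functional equation in the form $|\zeta(s)|\asymp|t|^{1/2-\sigma}|\zeta(1-s)|$ and simply absorbs the factor $|t|^{1/2-\sigma}\le\exp(\log|t|/\log\log|t|)$, whereas you integrate $\Re\frac{\zeta'}{\zeta}(u+it)$ horizontally over $[\tfrac12-\delta',\tfrac12+\delta']$ using Lemma~\ref{lem:MV}, observing that under RH each zero term is odd about $u=\tfrac12$ and cancels exactly, so only the $O(\delta'\log t)=O(\log t/\log\log t)$ contribution of the error term survives. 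Both devices cost the same admissible factor $\exp(O(\log|t|/\log\log|t|))$, as you yourself note in your closing remark sketching the functional-equation variant; the trade-off is that your primary route requires the small extra care about $t$ being the ordinate of a zero (which you correctly dispose of by continuity), while the paper's route is a one-line citation of a standard estimate for $\cX$. Either way the lemma follows with an absolute constant $\lambda$.
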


\begin{proof}
By \cite[Theorem~13.18]{MontVau}, there is an absolute constant $\lambda_0>0$
such that
\[
\big|\zeta(s)\big|\le
\exp\bigg(\frac{\lambda_0}{\log\log|t|}\bigg)
\qquad(\sigma\ge\tfrac12,~|t|\ge 10).
\]
For $\sigma$ in the range $\frac12-\frac{1}{\log\log|t|}\le\sigma\le\frac12$,
we apply \cite[Corollary~10.5]{MontVau}:
\[
\big|\zeta(s)\big|\asymp
|t|^{1/2-\sigma}\big|\zeta(1-s)\big|.
\]
Since $\tfrac12-\sigma\le\tfrac{1}{\log\log|t|}$ and
$1-\sigma\ge\tfrac12$, we get that
\[
|t|^{1/2-\sigma}\le\exp\bigg(\frac{\log|t|}{\log\log|t|}\bigg)
\mand
\big|\zeta(1-s)\big|
\le\exp\bigg(\frac{\lambda_0\log|t|}{\log\log|t|}\bigg).
\]
The lemma is valid with $\lambda\defeq\lambda_0+1$.
\end{proof}

\subsection{Estimates with $\cX(s)$}

In \S\ref{sec:proof} below, we use the functional equations
\be\label{eq:fun1}
\zeta(s)=\zeta(1-s)\cX(s)
\ee
and
\be\label{eq:fun2}
\frac{\zeta'}{\zeta}(s)
=-\frac{\zeta'}{\zeta}(1-s)+\log\pi
-\tfrac12\psi\big(\tfrac12s\big)
-\tfrac12\psi\big(\tfrac12(1-s)\big),
\ee
where
\be\label{eq:barchetta}
\cX(s)\defeq 2^s\pi^{s-1}\Gamma(1-s)\sin\tfrac\pi 2s
\mand
\psi(s)\defeq\frac{\Gamma'}{\Gamma}(s).
\ee
Here, we collect some information about the behavior of $\cX$.

\begin{lemma}\label{lem:X expansion}
Let $\cI$ be a bounded interval in $\R$. Uniformly for $\sigma\in\cI$
and $t\ge 1$, we have
\be\label{eq:spray}
\cX(1-\sigma+it)=\er^{\pi i/4}
\exp\Big(\!-it\log\Big(\frac{|t|}{2\pi\er}\Big)\Big)
\Big(\frac{|t|}{2\pi}\Big)^{\sigma-1/2}\big\{1+O_\cI(|t|^{-1})\big\}.
\ee
\end{lemma}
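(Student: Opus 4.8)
The plan is to read off \eqref{eq:spray} from the definition $\cX(s)=2^s\pi^{s-1}\Gamma(1-s)\sin\tfrac\pi2 s$ by means of Stirling's formula. Writing $s=1-\sigma+it$, we have
\[
\cX(1-\sigma+it)=2^{1-\sigma+it}\,\pi^{-\sigma+it}\,\Gamma(\sigma-it)\,\sin\Big(\tfrac{\pi}{2}(1-\sigma+it)\Big),
\]
and the two analytically nontrivial factors are the Gamma value and the sine. Since $t\ge 1$, the sine is immediate: writing $\sin z=\tfrac{1}{2i}(\er^{iz}-\er^{-iz})$ with $z=\tfrac\pi2(1-\sigma)+\tfrac{\pi it}{2}$, the term $\er^{iz}$ has modulus $\er^{-\pi t/2}$ while $\er^{-iz}$ has modulus $\er^{\pi t/2}$, so after elementary simplification
\[
\sin\Big(\tfrac{\pi}{2}(1-\sigma+it)\Big)=\tfrac12\,\er^{i\pi\sigma/2}\,\er^{\pi t/2}\bigl(1+O(\er^{-\pi t})\bigr),
\]
uniformly for $\sigma$ in the bounded interval $\cI$.

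Next I would apply Stirling in the form $\Gamma(w)=\sqrt{2\pi}\,w^{w-1/2}\er^{-w}(1+O(|w|^{-1}))$, valid for $w=\sigma-it$ since $|w|\ge 1$ and, $\sigma$ being bounded and $t\ge 1$, $w$ lies in a fixed subsector $|\arg w|\le\pi-\delta$. The one delicate point is the factor $w^{w-1/2}=\exp((w-\tfrac12)\log w)$: factoring $w=(-it)(1+i\sigma/t)$ gives $\log w=\log t-\tfrac{i\pi}{2}+\log(1+i\sigma/t)$, and because $w-\tfrac12$ has modulus $\asymp t$ one must expand $\log(1+i\sigma/t)=i\sigma/t+O(t^{-2})$ to \emph{second} order; this expansion produces a spurious factor $\er^{\sigma}$, which cancels precisely against the $\er^{-\sigma}$ hidden in $\er^{-w}=\er^{-\sigma}\er^{it}$. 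Carrying the computation through yields
\[
\Gamma(\sigma-it)=\sqrt{2\pi}\;t^{\sigma-1/2}\,\er^{-i\pi(\sigma-1/2)/2}\,\er^{-\pi t/2}\,\er^{it}\,\er^{-it\log t}\bigl(1+O(t^{-1})\bigr),
\]
uniformly for $\sigma\in\cI$.

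It then remains to multiply the three pieces and simplify. The factors $\er^{\pm\pi t/2}$ cancel; the phases $\er^{-i\pi(\sigma-1/2)/2}$ and $\er^{i\pi\sigma/2}$ combine to the constant $\er^{i\pi/4}$; the scalar prefactors $2^{1-\sigma}$, $\sqrt{2\pi}$, $\tfrac12$ together with the powers of $\pi$ assemble into $(2\pi)^{1/2-\sigma}$, which paired with $t^{\sigma-1/2}$ becomes $(t/2\pi)^{\sigma-1/2}$; and the oscillatory exponents $it\log 2\pi$, $it$, $-it\log t$ combine to $-it\log(t/2\pi\er)$. Since $t\ge 1$ gives $|t|=t$, this is exactly \eqref{eq:spray}, and uniformity in $\sigma$ over $\cI$ is automatic because every error term above is uniform for bounded $\sigma$.

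I expect the main obstacle to be purely one of bookkeeping: propagating a genuine relative error of size $O(|t|^{-1})$ through the term $w^{w-1/2}$ (which is exactly why the second-order expansion of $\log(1+i\sigma/t)$ is needed) while taking care not to discard the $\sigma$-dependent constants along the way. As a shortcut, one could instead quote the classical asymptotic $\cX(s)=(2\pi/t)^{s-1/2}\er^{i(t+\pi/4)}(1+O(t^{-1}))$ for $s=\sigma+it$ with $t\ge 1$ and $\sigma$ bounded (see, e.g., Titchmarsh~\cite{Titchmarsh}) and then perform only the final algebraic rearrangement with $s=1-\sigma+it$.
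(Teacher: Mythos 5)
Your proposal is correct and follows essentially the same route as the paper: expand $\cX$ from its definition, apply Stirling's formula to the Gamma factor (with the second-order expansion of $\log w$ needed to capture the $\er^{\sigma}$ cancellation against $\er^{-w}$), and replace the sine by its dominant exponential up to $O(\er^{-\pi t})$. The bookkeeping in your three factors checks out and reproduces \eqref{eq:spray} exactly.
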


\begin{proof}
From the definition of $\cX$ (see \eqref{eq:barchetta}) it follows that
\[
\cX(1-s)=2^{1-s}\pi^{-s}\Gamma(s)\sin\tfrac{\pi}{2}(1-s).
\]
Suppose $s=\sigma+it$ with $t\ge 1$.
Using Stirling's formula for the gamma function
\[
\Gamma(s)=\sqrt{2\pi}\,s^{s-1/2}\er^{-s}\{1+O(t^{-1})\}
\]
(see, e.g., Montgomery and Vaughan~\cite[Theorem~C.1]{MontVau})
along with the estimates
\[
(s-\tfrac12)\log s=(\sigma -\tfrac12)\log t+\sigma 
+(t\log t-\tfrac{\pi}{4})i+\tfrac{\pi is}{2}+O(t^{-1})
\]
and
\[
\sin\tfrac{\pi}{2}(1-s)
=\tfrac12\er^{-\pi i s/2}\{1+O(\er^{-\pi t})\},
\]
a straightforward computation leads to \eqref{eq:spray}.
\end{proof}

The next result is due to Gonek~\cite[Lemmas~2 and 3]{Gonek1};
the proof is based on the stationary phase method.

\begin{lemma}\label{lem:gonekLem8}
Uniformly for $10a\ge b>a\ge 10$, $\sigma\in[\frac{1}{10},10]$,
and $m\in\{0,1\}$,
\dalign{
&\int_a^b\exp\Big(\!-it\log\Big(\frac{t}{u\cdot\er}\Big)\Big)
\Big(\frac{t}{2\pi}\Big)^{\sigma-1/2}\Big(\log\frac{t}{2\pi}\Big)^m\,dt\\
&\qquad\qquad\qquad\qquad=(2\pi)^{1-\sigma}u^\sigma\er^{(u-\frac\pi 4)i}
\Big(\log\frac{u}{2\pi}\Big)^m\cdot\ind{}(a,b;u)
+O\big(E(\log a)^m\big),
}
where
\[
\ind{}(a,b;u)\defeq\begin{cases}
1&\quad\hbox{if $a<u\le b$},\\
0&\quad\hbox{otherwise},\\
\end{cases}
\]
and
\[
E\defeq a^{\sigma-1/2}+\frac{a^{\sigma+1/2}}{|a-u|+a^{1/2}}
+\frac{b^{\sigma+1/2}}{|b-u|+b^{1/2}}.
\]
\end{lemma}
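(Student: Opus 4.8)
The plan is to prove Lemma~\ref{lem:gonekLem8} by the method of stationary phase, as Gonek does in \cite[Lemmas~2 and 3]{Gonek1}; I sketch the argument. Write the integral as $I\defeq\int_a^b g(t)\,\er^{i\phi(t)}\,dt$, where
\[
g(t)\defeq\big(\tfrac{t}{2\pi}\big)^{\sigma-1/2}\big(\log\tfrac{t}{2\pi}\big)^m\mand\phi(t)\defeq-t\log\tfrac{t}{u\er}=-t\log t+t\log u+t .
\]
Then $\phi'(t)=\log(u/t)$ and $\phi''(t)=-1/t$, so $\phi$ has a unique stationary point on $(0,\infty)$, namely $t=u$, at which $\phi(u)=u$ and $\phi''(u)=-1/u<0$. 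Since $a<b\le 10a$, we have $t\asymp a\asymp b$ throughout $[a,b]$, and $g$ is slowly varying there: $|g(t)|\asymp a^{\sigma-1/2}(\log a)^m$, $|g'(t)|\ll|g(t)|/t$ and $|g''(t)|\ll|g(t)|/t^{2}$, with constants depending only on $\sigma\in[\tfrac{1}{10},10]$ and $m\in\{0,1\}$. As $|\phi''|\asymp 1/a$, the stationary region has width $\asymp a^{1/2}$, the scale on which $|\phi'(t)|=|\log(u/t)|$ grows past $|\phi''|^{1/2}$.

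First I would treat \emph{the non-stationary case}, $u\notin(a,b)$ with $\mathrm{dist}(u,[a,b])\gg a^{1/2}$. Here $\phi'$ is monotone with $|\phi'(t)|\gg|u-t|/a$ on $[a,b]$; writing $I=\int_a^b\frac{g}{i\phi'}\,d(\er^{i\phi})$ and integrating by parts once produces boundary terms $\ll|g(a)|/|\phi'(a)|+|g(b)|/|\phi'(b)|$ plus a remaining integral $\ll\int_a^b\big(\frac{|g'|}{|\phi'|}+\frac{|g|\,|\phi''|}{|\phi'|^{2}}\big)\,dt$ of the same order, whence
\[
I\ll\Big(a^{\sigma-1/2}+\frac{a^{\sigma+1/2}}{|a-u|}+\frac{b^{\sigma+1/2}}{|b-u|}\Big)(\log a)^m\ll E(\log a)^m ,
\]
with no main term, as required (the indicator being $0$ here).

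Next I would treat \emph{the stationary case}, $u\in(a,b)$ with $\mathrm{dist}(u,\{a,b\})\gg a^{1/2}$, via the explicit saddle-point expansion: using $\phi(t)=\phi(u)+\tfrac12\phi''(u)(t-u)^{2}+O(|t-u|^{3}/a^{2})$, $g(t)=g(u)+O(|t-u|\,|g(u)|/a)$, and the Fresnel integral $\int_{\R}\er^{\frac{i}{2}\phi''(u)s^{2}}\,ds=\er^{-i\pi/4}\sqrt{2\pi/|\phi''(u)|}=\er^{-i\pi/4}\sqrt{2\pi u}$ (the $-\tfrac{\pi}{4}$ because $\phi''(u)<0$), one extracts the main term
\[
g(u)\,\er^{i\phi(u)}\,\er^{-i\pi/4}\sqrt{2\pi u}=(2\pi)^{1-\sigma}u^{\sigma}\,\er^{(u-\pi/4)i}\big(\log\tfrac{u}{2\pi}\big)^m ,
\]
which is exactly the term claimed (present since $a<u\le b$). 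The error consists of the two endpoint contributions $\ll\frac{a^{\sigma+1/2}}{|a-u|+a^{1/2}}(\log a)^m+\frac{b^{\sigma+1/2}}{|b-u|+b^{1/2}}(\log b)^m$ — the $+a^{1/2}$ and $+b^{1/2}$ marking where integration by parts gives way to the saddle — together with the generic stationary-phase error, of order
\[
\frac{|g(u)|\,|\phi'''(u)|}{|\phi''(u)|^{2}}+\frac{|g'(u)|}{|\phi''(u)|}\asymp a^{\sigma-1/2}(\log a)^m ,
\]
which supplies the $a^{\sigma-1/2}$ term of $E$. Thus $I$ equals the claimed main term plus $O(E(\log a)^m)$.

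The remaining \emph{transition case}, $\mathrm{dist}(u,\{a,b\})\ll a^{1/2}$ — where an endpoint falls inside the saddle zone and neither clean expansion applies verbatim — is the main obstacle. There one must show directly that $I\ll a^{\sigma}(\log a)^m$; this can be done by cutting $[a,b]$ at distance $\asymp a^{1/2}$ from $u$ and from the near endpoint and applying van der Corput's second-derivative test on the short middle pieces and integration by parts elsewhere. The bound is harmless: when $|a-u|\ll a^{1/2}$ (resp.\ $|b-u|\ll a^{1/2}$) we have $E\gg\frac{a^{\sigma+1/2}}{|a-u|+a^{1/2}}\gg a^{\sigma}$ (resp.\ $E\gg a^{\sigma}$), so the main term — of that same size $a^{\sigma}(\log a)^m$ — may be inserted or deleted freely, in agreement with the sharp cutoff in the indicator, and either way is absorbed into $O(E(\log a)^m)$. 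Carrying this transition analysis through cleanly, together with the bookkeeping of the phase constant $\er^{i\pi/4}$ and of the factor $\big(\log\tfrac{t}{2\pi}\big)^m$, is where the real work lies; everything else is routine.
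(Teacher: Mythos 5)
The paper offers no proof of this lemma at all---it simply cites Gonek [Lemmas 2 and 3]---and your stationary-phase sketch is exactly the argument underlying those lemmas: the saddle at $t=u$ (where $\phi(u)=u$ and $\phi''(u)=-1/u$) yields precisely the stated main term $(2\pi)^{1-\sigma}u^{\sigma}\er^{(u-\pi/4)i}(\log\frac{u}{2\pi})^{m}$, and your observation that $E\gg a^{\sigma}$ whenever $u$ lies within $O(a^{1/2})$ of an endpoint is the right way to reconcile the sharp cutoff in the indicator with the smooth analysis. The one imprecision is the claim $|\phi'(t)|\gg|u-t|/a$ in the non-stationary case, which fails when $u\gg b$ because $\phi'(t)=\log(u/t)$ grows only logarithmically in $u$; but in that regime $|\phi'(t)|\gg 1$ on $[a,b]$, so the boundary terms from integration by parts are $\ll a^{\sigma-1/2}(\log a)^{m}$ and are still absorbed by the first term of $E$.
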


The next lemma is a variant of 
Conrey, Ghosh, and Gonek \cite[Lemma~1]{ConGhoGon}.

\begin{lemma}\label{lem:CGG-get schwifty}
Define
\[
f_j(s)\defeq\begin{cases}
1&\quad\hbox{if $j=0$},\\
\psi(\frac{s}2)&\quad\hbox{if $j=+1$},\\
\psi(\frac{1-s}2)&\quad\hbox{if $j=-1$},\\
\end{cases}
\qquad
g_j(v)\defeq\begin{cases}
1&\quad\hbox{if $j=0$},\\
\log(\pi v)-\frac{\pi i}{2}&\quad\hbox{if $j=+1$},\\
\log(\pi v)+\frac{\pi i}{2}&\quad\hbox{if $j=-1$}.\\
\end{cases}
\]
For each $j\in\{0,\pm 1\}$, and uniformly for
$2T_1>T_2>T_1\ge 10(|y|+1)$, $c\in[\frac{1}{2},10]$, 
and $v>0$, we have the uniform estimate
\be
\label{eq:prunes}
\begin{split}
&\frac{1}{2\pi i}\int_{c-iT_2}^{c-iT_1}
\cX(1-s+iy)f_j(s)v^{-s}\,ds\\
&\qquad\qquad\qquad\qquad=v^{-iy}\e(v)g_j(v)\cdot\ind{}\big(T'_1,T'_2;2\pi v\big)
+O_y(E),
\end{split}
\ee
where $T'_j\defeq T_j+y$ for $j\in\{1,2\}$, and
\[
E\defeq \frac{(\log T'_1)^{|j|}}{v^c}\bigg\{(T'_1)^{c-1/2}
+\frac{(T'_1)^{c+1/2}}{|T'_1-2\pi v|+(T'_1)^{1/2}}
+\frac{(T'_2)^{c+1/2}}{|T'_2-2\pi v|+(T'_2)^{1/2}}\bigg\}.
\]
The implied constant depends only on $y$.
\end{lemma}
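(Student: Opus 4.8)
The plan is to insert the asymptotic expansion of $\cX$ from Lemma~\ref{lem:X expansion} into the integral and then apply the stationary-phase estimate of Lemma~\ref{lem:gonekLem8}. First I would parametrize the contour by $s=c-i\tau$; since $ds=-i\,d\tau$ and the direction of traversal is reversed, the integral in \eqref{eq:prunes} equals
\[
\frac{v^{-c}}{2\pi}\int_{T_1}^{T_2}\cX\big(1-c+i(\tau+y)\big)\,f_j(c-i\tau)\,v^{i\tau}\,d\tau,
\]
and the substitution $\tau'=\tau+y$ moves the range of integration to $[T'_1,T'_2]$. The hypotheses $2T_1>T_2>T_1\ge 10(|y|+1)$ give $10T'_1\ge T'_2>T'_1\ge 10$ and $\tau+y\asymp\tau$ throughout the range, so Lemma~\ref{lem:X expansion} applies with $\sigma=c$ and gives
\[
\cX\big(1-c+i\tau'\big)=\er^{\pi i/4}\exp\!\Big(\!-i\tau'\log\tfrac{\tau'}{2\pi\er}\Big)\Big(\tfrac{\tau'}{2\pi}\Big)^{c-1/2}\big\{1+O(1/\tau')\big\}.
\]
Writing $v^{i\tau}=v^{-iy}v^{i\tau'}$, the total phase becomes $\exp\!\big(-i\tau'\log(\tau'/(2\pi v\,\er))\big)$, which is precisely the phase occurring in Lemma~\ref{lem:gonekLem8} with the parameter $u=2\pi v$.

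Next I would record the behaviour of the digamma factors. By Stirling's formula $\psi(w)=\log w+O(1/|w|)$ for $|w|$ large; applying this with $w=\tfrac12(c-i\tau)$ and $w=\tfrac12(1-c+i\tau)$, and combining it with the elementary estimates $\arg(c-i\tau)=-\tfrac{\pi}{2}+O(1/\tau)$, $\arg(1-c+i\tau)=\tfrac{\pi}{2}+O(1/\tau)$, $\log|c-i\tau|=\log|1-c+i\tau|=\log\tau+O(1/\tau^2)$, and $\log(\tau/2)=\log(\tau'/2\pi)+\log\pi+O_y(1/\tau)$, one obtains in all three cases the uniform expansion
\[
f_j(c-i\tau)=|j|\log\tfrac{\tau'}{2\pi}+g_j(1)+O_y(1/\tau'),
\]
where $g_0(1)=1$ and $g_{\pm1}(1)=\log\pi\mp\tfrac{\pi i}{2}$; note the identity $g_j(v)=|j|\log v+g_j(1)$ that this makes visible. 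Substituting this expansion and the expansion of $\cX$, the contribution of the two error factors $O(1/\tau')$ is estimated trivially: on $[T'_1,T'_2]$, whose length is $T'_2-T'_1\le T'_1$, one has $(\tau'/2\pi)^{c-1/2}|f_j(c-i\tau)|\ll(\tau')^{c-1/2}(\log\tau')^{|j|}$, so these terms contribute $\ll_y v^{-c}(T'_1)^{c-1/2}(\log T'_1)^{|j|}$, which is absorbed into the first summand of $E$.

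What remains is $\frac{v^{-c}}{2\pi}\er^{\pi i/4}v^{-iy}$ times
\[
\int_{T'_1}^{T'_2}\exp\!\Big(\!-i\tau'\log\tfrac{\tau'}{2\pi v\,\er}\Big)\Big(\tfrac{\tau'}{2\pi}\Big)^{c-1/2}\Big(|j|\log\tfrac{\tau'}{2\pi}+g_j(1)\Big)\,d\tau'.
\]
I would apply Lemma~\ref{lem:gonekLem8} with $a=T'_1$, $b=T'_2$, $\sigma=c$, $u=2\pi v$, taking $m=1$ for the $\log$-term and $m=0$ for the constant term. Its main term is
\dalign{
&(2\pi)^{1-c}(2\pi v)^c\,\er^{(2\pi v-\pi/4)i}\big(|j|\log v+g_j(1)\big)\,\ind{}(T'_1,T'_2;2\pi v)\\
&\qquad\qquad\qquad\qquad=2\pi v^c\,\er^{-\pi i/4}\,\e(v)\,g_j(v)\,\ind{}(T'_1,T'_2;2\pi v),
}
using $g_j(v)=|j|\log v+g_j(1)$ and $\er^{(2\pi v-\pi/4)i}=\e(v)\er^{-\pi i/4}$; multiplying by $\frac{v^{-c}}{2\pi}\er^{\pi i/4}v^{-iy}$ collapses all the constants ($\frac{1}{2\pi}\cdot\er^{\pi i/4}\cdot 2\pi\cdot\er^{-\pi i/4}=1$ and $v^{-c}\cdot v^c=1$), producing exactly $v^{-iy}\e(v)g_j(v)\ind{}(T'_1,T'_2;2\pi v)$. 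The error term of Lemma~\ref{lem:gonekLem8} is $O\big((\log T'_1)^{|j|}E_0\big)$, where $E_0$ is the quantity it calls $E$ with the present values of $a,b,\sigma,u$; after multiplication by $v^{-c}/(2\pi)$ this is precisely the quantity $E$ in the statement, and together with the error from the previous paragraph this finishes the proof.

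The main obstacle is not a single hard estimate but the careful bookkeeping of constants: one has to track $\er^{\pi i/4}$, $(2\pi)^{1-c}(2\pi v)^c$, and the $v^{-c}$ coming from $v^{-s}$ through the whole computation so that they cancel exactly. The one genuinely delicate point is the treatment of the digamma factor: it must be expanded about $\log(\tau'/2\pi)$, with the \emph{same} shift $\tau'=\tau+y$ that appears in the argument of $\cX$, so that stationary phase turns $\log(\tau'/2\pi)$ into $\log v$ while the leftover constant is exactly $g_j(1)$, whence $|j|\log v+g_j(1)=g_j(v)$ produces the main term of \eqref{eq:prunes}.
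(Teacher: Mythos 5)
Your proof is correct and follows essentially the same route as the paper's: the change of variables $s=c-i\tau$, $\tau'=\tau+y$, insertion of the expansion of $\cX$ from Lemma~\ref{lem:X expansion}, the uniform expansion $f_j=\log\frac{\tau'}{2\pi}+\log\pi\mp\frac{\pi i}{2}+O_y(1/\tau')$ for $j=\pm1$, and then Lemma~\ref{lem:gonekLem8} with $u=2\pi v$ and $m=0,1$. The only difference is that you spell out the constant bookkeeping and the identity $g_j(v)=|j|\log v+g_j(1)$ explicitly, which the paper leaves as "a straightforward computation."
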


\begin{proof}
For $T_1,T_2$ in the stated range, it is immediate that
$10T'_1\ge T'_2>T'_1\ge 10$. Making the change of variables
$s\mapsto c-it+iy$, and applying Lemma~\ref{lem:X expansion},
the left side of \eqref{eq:prunes} is equal to
\[
\frac{\er^{\pi i/4}}{2\pi v^{c+iy}}\int_{T'_1}^{T'_2}
\exp\Big(\!-it\log\Big(\frac{t}{2\pi v\er}\Big)\Big)
\Big(\frac{t}{2\pi}\Big)^{c-1/2}f_j(c-it+iy)
\big\{1+O(t^{-1})\big\}\,dt.
\]
When $j=0$, both functions $f_j$ and $g_j$ are identically one, and
the estimate \eqref{eq:prunes} follows at once by applying
Lemma~\ref{lem:gonekLem8} with $u\defeq 2\pi v$ and $m\defeq 0$.
On the other hand, for $j=\pm 1$, we substitute the uniform bound
\[
f_j(c-it+iy)
=\log\frac{t}{2\pi}+\log\pi\mp\tfrac{\pi i}{2}+O_y(t^{-1})
\qquad(t\in[T_2,T_1]),
\]
into the preceding integral, and the estimate stated in the lemma
follows by applying Lemma~\ref{lem:gonekLem8} 
with $u\defeq 2\pi v$ and $m=0,1$.
\end{proof}

{\Large\section{Proof of Theorem~\ref{thm:main2}}
\label{sec:proof}}

\subsection{Preliminaries}

In what follows, $\Theta=\Theta(A)\ge 2$ is a constant that depends only
on~$A$; a specific value of $\Theta$ is given below; see \S\ref{sec:endgame}.
From now on, any implied constants in the symbols $O$, $\ll$, and $\gg$
may depend on $y$ and $A$ but are independent of all other parameters
unless indicated otherwise.

Let $T_0=T_0(y,A)$ be a large parameter satisfying
\begin{alignat}{2}
\label{eq:T0conds1}
T_0&\ge 10(|y|+1),\\
\label{eq:T0conds2}
\log\log T_0&\ge \Theta^2,\\
\label{eq:T0conds3}
\tfrac{\log T_0}{\log\log T_0}&\ge|y|^{-1}.
\end{alignat}
Let $T_1,T_2$ be real numbers such that $2T_1>T_2>T_1>T_0$, and denote
\[
T'_j\defeq T_j+y\qquad(j=0,1,2).
\]
This notation is used systematically in what follows.
If $T_0$ is large enough, then
\[
\bfT\asymp T_1\asymp T_2\asymp T'_1\asymp T'_2.
\]
Once and for all, let $x$ be an arbitrary \emph{prime number}
in the range
\be\label{eq:bigbound1}
\bfT\sL^{-\Theta}<x<\er^{\pi/|y|}\cdot\bfT\sL^{-\Theta}.
\ee
Then
\be\label{eq:loggylog}
\log x=\log\bfT-\frac{\Theta\log\bfT}{\log\log\bfT}+O(1)
=\log\bfT+O\bigg(\frac{\log\bfT}{(\log\log\bfT)^{1/2}}\bigg),
\ee
where we used \eqref{eq:T0conds2} in the last step.

In proving Theorem~\ref{thm:main2}, we can assume without loss
of generality that neither $T_1$ nor $T_2$ is the ordinate
of a zero of $\zeta(s)$. Denote
\[
b\defeq\tfrac12-\tfrac{1}{\log\log\bfT}
\mand 
c\defeq 1+\tfrac{1}{\log x},
\]
and note that
\be\label{eq:xcTc}
x^c=\er x\asymp x\asymp \bfT\sL^{-\Theta}
\mand
\bfT^c=\bfT\exp\big(\tfrac{\log\bfT}{\log x}\big)\asymp\bfT,
\ee
where we used \eqref{eq:loggylog} for the last estimate.

Finally, let $\euR_\bfT$ be the region in $\C$ defined by
\[
\euR_\bfT\defeq\big\{s\in\C:\sigma\ge b\text{~and~}
\tfrac{1}{100}\bfT\le |t|\le 100\bfT\big\}.
\]
Using Lemma~\ref{lem:MV2}, we deduce that
there is an absolute\footnote{The constant $\lambda$ can be
chosen independent of $y$; however, $\sL$ depends on $y$ in view of
\eqref{eq:T0conds1}.} 
constant $\lambda>0$ such that
\be\label{eq:bigbound2}
\big|\zeta(s\pm iy)\big|
\le\sL^\lambda\qquad\text{for all~}s\in\euR_\bfT.
\ee

\subsection{The residue theorem}
By Cauchy's theorem, we have
\dalign{
\ssum{\rho=\frac12+i\gamma\\T_1<\gamma<T_2}x^\rho\zeta(\rho+iy)
&=\frac{1}{2\pi i}\bigg(\int_{c+iT_1}^{c+iT_2}
+\int_{c+iT_2}^{b+iT_2}
+\int_{b+iT_2}^{b+iT_1}
+\int_{b+iT_1}^{c+iT_1}\bigg)
\euD_y(s)x^s\,ds\\
&=I_1+I_2+I_3+I_4\quad\text{(say)}.
}
We estimate each term separately.

In the half-plane $\{\sigma>1\}$, the series representation
$\euD_y(s)=\sum_n D_y(n)n^{-s}$
is valid, and therefore
\dalign{
I_1&=\frac{1}{2\pi}\int_{T_1}^{T_2}\euD_y(c+it,\chi)\,x^{c+it}\,dt
=\frac{x^c}{2\pi}\sum_n\frac{D_y(n)}{n^c}
\int_{T_1}^{T_2}\Big(\frac x{n}\Big)^{it}\,dt\\
&=\frac{\Delta}{2\pi}D_y(x)
+O\bigg(x^c\sum_{n\ne x}\frac{|D_y(n)|}{n^c|\log(x/n)|}\bigg).
}
Applying Lemma~\ref{lem:series bound} and recalling
\eqref{eq:Dyn lower bound}, we see that
\[
I_1=-\frac{\Delta}{2\pi}\log x+O(x\log^2x),
\]
where we used the fact that $x^c\asymp x$. Using \eqref{eq:loggylog},
and taking into account that $x\log^2x\ll\bfT\sL^{-\Theta+1}$, it follows
that
\be\label{eq:I1result}
I_1=-\frac{\Delta}{2\pi}\log\bfT+O\bigg(\frac{\Delta\log\bfT}{(\log\log\bfT)^{1/2}}
+\bfT\sL^{-\Theta+1}\bigg).
\ee

Next, using Lemma~\ref{lem:MV} it follows that
\be\label{eq:I2embrace}
I_2=\frac{1}{2\pi i}\ssum{\rho=\frac12+i\gamma\\|\gamma-T_2|<1}
\int_{c+iT_2}^{b+iT_2}
\frac{\zeta(s+iy)x^{s}\,ds}{s-\rho}
+O\bigg(M(\log \bfT)\int_{b}^c
x^{\sigma}\,d\sigma\bigg),
\ee
where
\[
M\defeq\max\big\{\big|\zeta(\sigma+iT'_2)\big|:
b\le\sigma\le c\big\}.
\]
Since $M\le\sL^\lambda$ by \eqref{eq:bigbound2}, 
$\log\bfT\le\sL$, and 
$\int_b^c x^\sigma\,d\sigma\ll x^c\asymp\bfT\sL^{-\Theta}$
by \eqref{eq:xcTc},
the error term in~\eqref{eq:I2embrace} is at most
$O(\bfT\sL^{-\Theta+\lambda+1})$.
To bound the main term, we follow
the method of Gonek~\cite[p.\,402]{Gonek2}.
Let $\rho=\frac12+i\gamma$ be a zero of $\zeta(s)$ included in
the sum in \eqref{eq:I2embrace}. By Cauchy's theorem, the integral
\[
J_\rho\defeq\int_{c+iT_2}^{b+iT_2}
\frac{\zeta(s+iy)x^s\,ds}{s-\rho}
\]
is equal to
\[
J_\rho=-2\pi i\,\zeta(\rho+iy)x^\rho
+\bigg(\int_{b+iT_2}^{b+i(T_2+10)}
+\int_{b+i(T_2+10)}^{c+i(T_2+10)}
+\int_{c+i(T_2+10)}^{c+iT_2}\bigg)
\frac{\zeta(s+iy)x^s\,ds}{s-\rho}.
\]
The residue is $O(\bfT^{1/2}\sL^{-\Theta/2+\lambda})$ by 
\eqref{eq:bigbound1} and \eqref{eq:bigbound2}. On the other hand, 
using the bound
\[
\frac{1}{|s-\rho|}\ll\begin{cases}
\log\log\bfT&\quad\hbox{if $\sigma=b$},\\
1&\quad\hbox{if $t=T_2+10$},\\
1&\quad\hbox{if $\sigma=c$},\\
\end{cases}
\]
and applying \eqref{eq:xcTc} and \eqref{eq:bigbound2},
the overall contribution from the three integrals above is
 $O(\bfT\sL^{-\Theta+\lambda}\log\log\bfT)$.
Hence, $J_\rho\ll\bfT\sL^{-\Theta+\lambda}\log\log\bfT$
for each zero $\rho$ in the sum \eqref{eq:I2embrace}.
Since there are at most $O(\log\bfT)$ such zeros, we deduce that
\be\label{eq:I2result}
I_2\ll\bfT\sL^{-\Theta+\lambda+1}.
\ee
By a similar argument,
\be\label{eq:I4result}
I_4\ll\bfT\sL^{-\Theta+\lambda+1}.
\ee

It remains to bound 
\[
I_3=\frac{1}{2\pi i}\int_{b+iT_2}^{b+iT_1}\frac{\zeta'}{\zeta}(s)\zeta(s+iy)x^s\,ds.
\]
Using the functional equations \eqref{eq:fun1}
and \eqref{eq:fun2}, followed by a change of variables
$s\mapsto 1-s$, we get that
\[
I_3=\frac{-1}{2\pi i}\int\limits_{b'-iT_2}^{b'-iT_1}
\bigg\{\hskip-3pt-\frac{\zeta'}{\zeta}(s)+\log\pi
-\tfrac12\psi\big(\tfrac{s}{2}\big)
-\tfrac12\psi\big(\tfrac{1-s}{2}\big)
\bigg\}\zeta(s-iy)\cX(1-s+iy)x^{1-s}\,ds,
\]
where
\[
b'\defeq 1-b=\tfrac12+\tfrac{1}{\log\log\bfT}.
\]
We shift the line of integration to the right, back to the vertical 
line $\{\sigma=c\}$. Since $b'>\frac12$, no poles are encountered
along the way, and so the residue is zero. Let $\Omega$ be
the set of points $s$ on the union of
the horizontal line segments $b'-iT_j\longrightarrow c-iT_j$ for $j=1,2$.
By \eqref{eq:bigbound2} we have
\[
\zeta(s-iy)\ll\sL^\lambda\qquad(s\in\Omega).
\]
Moreover, since the distance
from any zero of $\zeta(s)$ to the half-plane $\{\sigma\ge b'\}$ is at
least $\tfrac{1}{\log\log\bfT}$ (under RH), Lemma~\ref{lem:MV}
implies that
\[
\frac{\zeta'}{\zeta}(s)\ll\log\bfT\log\log\bfT\qquad(s\in\Omega).
\]
Also, by Lemma~\ref{lem:X expansion}
we have
\[
\cX(1-s+iy)\ll\bfT^{\sigma-1/2}\qquad(s\in\Omega).
\]
Next, it is well known that the estimate
\[
\psi(s)=\log s+O(\tau^{-1})
\]
holds uniformly in any half-plane $\{\sigma\ge\sigma_0>0\}$,
where $\tau=\tau(t)\defeq |t|+10$, where the implied
constant depends only on $\sigma_0$. Taking $\sigma_0\defeq\tfrac{1}{10}$
(say), this implies
\[
\psi\big(\tfrac{s}{2}\big)=O(\log\bfT)
\mand
\psi\big(\tfrac{1-s}{2}\big)=O(\log\bfT)\qquad(s\in\Omega).
\]
Finally, we have trivially
\[
x^{1-s}\ll \bfT^{1-\sigma}\qquad(s\in\Omega).
\]
Putting everything together, we conclude that
\be\label{eq:I3Js equation}
I_3=J_1+J_2+J_3+O(\bfT^{1/2}\sL^{\lambda+1}),
\ee
where
\dalign{
J_1&\defeq\frac x{2\pi i}\int_{c-iT_2}^{c-iT_1}
\frac{\zeta'}{\zeta}(s)\zeta(s-iy)\cX(1-s+iy)x^{-s}\,ds,\\
J_2&\defeq\frac{-x\log\pi}{2\pi i}\int_{c-iT_2}^{c-iT_1}
\zeta(s-iy)\cX(1-s+iy)x^{-s}\,ds,\\
J_3&\defeq\frac x{4\pi i}\int_{c-iT_2}^{c-iT_1}
\big\{\psi\big(\tfrac{s}{2}\big)+\psi\big(\tfrac{1-s}{2}\big)\big\}
\zeta(s-iy)\cX(1-s+iy)x^{-s}\,ds.
}

\subsection{Bound on $J_1$}
\label{sec:J1}

Everywhere along the line $\{\sigma=c\}$ we have
\[
\frac{\zeta'}{\zeta}(s)\zeta(s-iy)
=\sum_n\frac{\overline{D_y(n)}}{n^s};
\]
consequently,
\[
J_1=x\sum_n\overline{D_y(n)}\cdot\frac{1}{2\pi i}
\int_{c-iT_2}^{c-iT_1}
\cX(1-s+iy)(nx)^{-s}\,ds.
\]
Since $2T_1>T_2>T_1\ge 10(|y|+1)$ (see \eqref{eq:T0conds1}),
$v\defeq nx>0$, and $c\in[\frac{1}{10},2]$, we can apply
Lemma~\ref{lem:CGG-get schwifty} with $j\defeq 0$
estimate each integral in the above sum.
Recalling that $x$ is an \emph{integer}, and thus
$\e(v)=\e(nx)=1$ for each~$n$ in the sum, it follows that
$J_1=L_1+O(L_2)$ with
\dalign{
L_1&\defeq x^{1-iy}\sum_{T'_1/(2\pi x)<n\le T'_2/(2\pi x)}
\overline{D_y(n)}n^{-iy},\\
L_2&\defeq \sum_n \frac{\log n}{n^c}\bigg\{(T'_1)^{c-1/2}
+\frac{(T'_1)^{c+1/2}}{|T'_1-2\pi nx|+(T'_1)^{1/2}}
+\frac{(T'_2)^{c+1/2}}{|T'_2-2\pi nx|+(T'_2)^{1/2}}\bigg\},
}
where we have used the fact that $|D_y(n)|\le\log n$
(see \eqref{eq:Dyn upper bound}) and that $x^c\asymp x$.
Applying Lemma~\ref{lem:summation bound} with
\[
t'\defeq\frac{T'_1}{2\pi x}\asymp\sL^\Theta,\qquad
t\defeq\frac{T'_2}{2\pi x}\asymp\sL^\Theta,\qquad
\Delta'=t-t'=\frac{\Delta}{2\pi x},
\]
we see that $L_1$ is
\[
\ll\Delta\big\{|y|^{-1}
+(\kappa/\sL^\Theta)^{1/2}\log^2(\sL^\Theta/\kappa+|y|)
+\log \kappa\big\}+x\,\sL^\Theta/\kappa
+x\,(\sL^\Theta\kappa)^{1/2}\log^2\sL^\Theta
\]
uniformly for all $\kappa$ in
the range $\er\le\kappa\le T'_2/(2\pi\er x)$.
Let us recall that the main term of $I_1$ is
$-\frac{1}{2\pi}\Delta\log\bfT$; see \eqref{eq:I1result}.
To ensure that this term and other terms arising from $J_3$
(see~\S\ref{sec:J3} below) are not dominated by the error,
$\kappa$~cannot be too large;
specifically, we need $\log\kappa=o(\log\bfT)$.
We choose $\kappa\defeq\sL^{\Theta/4}$, and this leads to the bound
\[
L_1\ll\Delta\big\{|y|^{-1}
+\tfrac{\log\bfT}{\log\log\bfT}\big\}+x\sL^{3\Theta/4}
\ll\tfrac{\Delta\log\bfT}{\log\log\bfT}+\bfT\sL^{-\Theta/4},
\]
where we used \eqref{eq:T0conds3} in the last step.
On the other hand, using Lemma~\ref{lem:technical bound} to bound $L_2$,
taking into account the bounds $-\zeta'(c)\ll\log^2x$
and $x\ll\bfT\sL^{-\Theta}$, we see that
\[
L_2\ll\bfT^{c-1/2}\log^2x+\bfT^{c+1/2}\cdot
\frac{(x+\bfT^{1/2}\log\bfT)\log\bfT}{\bfT^{c+1/2}}
\ll\bfT\sL^{-\Theta+1}.
\]
Consequently,
\be\label{eq:J1bound}
J_1\ll\tfrac{\Delta\log\bfT}{\log\log\bfT}+\bfT\sL^{-\Theta/4}.
\ee

\subsection{Bound on $J_2$}
\label{sec:J2}

On the line $\{\sigma=c\}$ we have $\zeta(s-iy)=\sum_nn^{-s+iy}$;
thus,
\[
J_2=-x\log\pi\sum_n n^{iy}
\cdot\frac{1}{2\pi i}\int_{c-iT_2}^{c-iT_1}
\cX(1-s+iy)(nx)^{-s}\,ds
\]
As in \S\ref{sec:J1}, we use Lemma~\ref{lem:CGG-get schwifty} with $j=0$
to write $J_2=L_3+O(L_4)$ with
\dalign{
L_3&\defeq-x^{1-iy}\log\pi\cdot
\big|\{n:T'_1<2\pi nx\le T'_2\}\big|\\
L_4&\defeq \sum_n \frac{1}{n^c}\bigg\{(T'_1)^{c-1/2}
+\frac{(T'_1)^{c+1/2}}{|T'_1-2\pi nx|+(T'_1)^{1/2}}
+\frac{(T'_2)^{c+1/2}}{|T'_2-2\pi nx|+(T'_2)^{1/2}}\bigg\}.
}
Clearly,
\[
L_3=-x^{1-iy}\log\pi
\bigg\{\frac{\Delta}{2\pi x}+O(1)\bigg\}
\ll \Delta+\bfT\sL^{-\Theta}.
\]
Also, comparing the definitions of $L_2$ and $L_4$, it is clear that
\[
L_4\ll L_2\ll\bfT\sL^{-\Theta+1}.
\]
Consequently,
\be\label{eq:J2bound}
J_2\ll \Delta+\bfT\sL^{-\Theta+1}.
\ee

\subsection{Estimate for $J_3$}
\label{sec:J3}

Write $J_3=J'_3+J''_3$ with
\dalign{
J'_3&\defeq\frac x{4\pi i}\int_{c-iT_2}^{c-iT_1}
\psi\big(\tfrac{s}{2}\big)\zeta(s-iy)\cX(1-s+iy)x^{-s}\,ds,\\
J''_3&\defeq\frac x{4\pi i}\int_{c-iT_2}^{c-iT_1}
\psi\big(\tfrac{1-s}{2}\big)\zeta(s-iy)\cX(1-s+iy)x^{-s}\,ds.
}
Unfolding the integral $J'_3$ as in \S\ref{sec:J2} we have
\[
J'_3=\frac{x^{1-iy}}{2}\sum_n n^{iy}
\cdot\frac{1}{2\pi i}\int_{c-iT_2}^{c-iT_1}
\psi\big(\tfrac{s}{2}\big)\cX(1-s+iy)(nx)^{-s}\,ds,
\]
and then we apply Lemma~\ref{lem:CGG-get schwifty} with $j=+1$
to express $J'_3=L_5+O(L_6)$ with
\dalign{
L_5&\defeq\frac{x^{1-iy}}{2}
\sum_{T'_1/(2\pi x)<n\le T'_2/(2\pi x)}
\big(\log(\pi nx)-\tfrac{\pi i}{2}\big),\\
L_6&\defeq \sum_n \frac{\log T'_1}{n^c}\bigg\{(T'_1)^{c-1/2}
+\frac{(T'_1)^{c+1/2}}{|T'_1-2\pi nx|+(T'_1)^{1/2}}
+\frac{(T'_2)^{c+1/2}}{|T'_2-2\pi nx|+(T'_2)^{1/2}}\bigg\}.
}
Since $\log(\pi nx)-\tfrac{\pi i}{2}=\log\bfT+O(1)$
for all $n$ in the sum for $L_5$, we have
\[
L_5=\frac {x^{1-iy}}{2}\bigg(\frac{\Delta}{2\pi x}+O(1)\bigg)
\big(\log\bfT+O(1)\big)
=\frac{x^{-iy}}{4\pi}\cdot\Delta\log\bfT
+O(\Delta+\bfT\sL^{-\Theta+1}).
\]
We also have
\[
L_6\ll\log\bfT\cdot L_4\ll\bfT\sL^{-\Theta+2},
\]
and therefore
\[
J'_3=\frac{x^{-iy}}{4\pi}\cdot\Delta\log\bfT
+O(\Delta+\bfT\sL^{-\Theta+2}).
\]
As the same estimate holds for $J''_3$
(using Lemma~\ref{lem:CGG-get schwifty} with $j=-1$), we get that
\be\label{eq:J3estimate}
J_3=\frac{x^{-iy}}{2\pi}\cdot\Delta\log\bfT
+O(\Delta+\bfT\sL^{-\Theta+2}).
\ee

\subsection{Endgame}
\label{sec:endgame}

Combining the results
\eqref{eq:I1result} and
\eqref{eq:I2result}\,--\,\eqref{eq:J3estimate},
we derive the estimate
\[
\ssum{\rho=\frac12+i\gamma\\T_1<\gamma<T_2}x^\rho\zeta(\rho+iy)
=\tfrac{1}{2\pi}(x^{-iy}-1)\Delta\log\bfT
+O\bigg(\frac{\Delta\log\bfT}{(\log\log\bfT)^{1/2}}
+\bfT\sL^{-A}\bigg).
\]
provided that $\Theta\ge \max\{A+\lambda+2,4A\}$, and this
finishes the proof.

{\Large\section{Concluding remarks}}

Let us define
\[
N_{0,y}(T)\defeq\big|\big\{\rho=\tfrac12+i\gamma:\zeta(\rho)=0\text{~and~}
\zeta(\rho+iy)\ne 0\big\}\big|,
\]
where each zero is counted according to its multiplicity.
Theorem~\ref{thm:main} implies that for any fixed
$C'>0$ one has (under RH) the lower bound
\[
N_{0,y}(T)\gg\exp\bigg(\frac{C'\log T}{\log\log T}\bigg)
\]
for all large $T$. Almost certainly, $N_{0,y}(T)$ is much
larger than this. In fact, it is reasonable to expect that
\[
N_{0,y}(T)=N_0(T)+O_y(1)
\]
for all $y\ne 0$, where
\[
N_0(T)\defeq\big|\big\{\rho=\tfrac12+i\gamma:\zeta(\rho)=0\big\}\big|
=\frac{T}{2\pi}\log\frac{T}{2\pi\er}+O(\log T).
\]
In fact, since the total number of gaps between the zeros of $\zeta(s)$
on the line $\{\sigma=\tfrac12\}$ is countable, for every $y\ne 0$
one has $N_{0,y}(T)=N_0(T)$ for all $T>0$, with at most countably many
exceptions. Under the \emph{Linear Independence Conjecture}, for every
\emph{rational} $y\ne 0$ one has $N_{0,y}(T)=N_0(T)$ for all $T>0$.

\bigskip\section*{Acknowledgments}

The author thanks Ali Ebadi and the anonymous referee for useful
remarks and for pointing out flaws in the original version
of the manuscript.

\end{document}